\title{On Blockers and Transversals of Maximum Independent Sets in Co-Comparability Graphs}
\titlerunning{Blockers and Transversals of Independent Sets in Co-Comparability Graphs} 
\author{Felicia Lucke}{Department of Informatics, University of Fribourg, Switzerland}{felicia.lucke@unifr.ch}{https://orcid.org/0000-0002-9860-2928}{}
\author{Bernard Ries}{Department of Informatics, University of Fribourg, Switzerland}{bernard.ries@unifr.ch}{https://orcid.org/0000-0003-4395-5547}{}
\authorrunning{F. Lucke and B. Ries} 
\keywords{Blocker problems, Transversal problems,  Vertex deletion,
Independence number, co-comparability graphs} 
\newcommand\yes{\textsc{Yes}}
\newcommand\np{\textsc{NP}}
\newcommand{\trans}{\textsc{Trans\-versal}}
\newcommand{\del}{\textsc{Deletion Blocker}}
\newcommand{\set}[1]{\ensuremath{ \left\lbrace #1 \right\rbrace }}
\newtheorem{property}[theorem]{Property}
 \newtheorem{claim1}{Claim}[theorem]
\DeclareMathOperator*{\pos}{pos}
\newcommand{\mincut}{\textsc{Vertex Cut}}
\DeclareMathOperator*{\leftext}{leftext}
\DeclareMathOperator*{\rightext}{rightext}
\newcommand\I{\mathcal{I}}
\DeclareMathOperator*{\level}{level}
\newcommand{\intInterval}[1]{[\![#1]\!]}
\definecolor{nicered}{RGB}{204,0,0}
\definecolor{lightblue}{RGB}{153,204,255}
\tikzstyle{vertex}=[thin,circle,inner sep=0.cm, minimum size=1.7mm, fill=black, draw=black]
\tikzstyle{evertex}=[thin,circle,inner sep=0.cm, minimum size=1.7mm, fill=none, draw=black]
\tikzstyle{bvertex}=[thin,circle,inner sep=0.cm, minimum size=1.7mm, fill=lightblue, draw=lightblue]
\tikzstyle{rvertex}=[thin,circle,inner sep=0.cm, minimum size=1.7mm, fill=nicered,draw=nicered]
\tikzstyle{edge}=[thick, draw = gray]
\tikzstyle{rededge}=[thick, draw = nicered]
\tikzstyle{bluedge}=[thick, draw = lightblue]
\tikzstyle{arc} = [thick, draw = Gray, >=stealth, shorten > = 0pt, ->]
\tikzstyle{bluearc} = [thick, draw = lightblue, >=stealth, shorten > = 0pt, ->]
\tikzstyle{redarc} = [thick, draw = nicered, >=stealth, shorten > = 0pt, ->]
\tikzstyle{thinedge}=[thin, draw = black]
\begin{document}

\maketitle

\begin{abstract}
In this paper, we consider the following two problems: (i) \del($\alpha$) where we are given an undirected graph $G=(V,E)$ and two integers $k,d\geq 1$ and ask whether there exists a subset of vertices $S\subseteq V$ with $|S|\leq k$ such that $\alpha(G-S) \leq \alpha(G)-d$, that is the independence number of $G$ decreases by at least $d$ after having removed the vertices from $S$; (ii) \trans($\alpha$) where we are given an undirected graph $G=(V,E)$ and two integers $k,d\geq 1$ and ask whether there exists a subset of vertices $S\subseteq V$ with $|S|\leq k$ such that for every maximum independent set~$I$ we have $|I\cap S| \geq d$. We show that both problems are polynomial-time solvable in the class of co-comparability graphs by reducing them to the well-known \mincut\ problem. Our results generalize a result of [Chang et al., Maximum clique transversals, Lecture Notes in Computer Science 2204, pp.~32-43, WG 2001] and a recent result of [Hoang et al., Assistance and interdiction problems on interval graphs, Discrete Applied Mathematics 340, pp.~153-170, 2023].
\end{abstract}


\section{Introduction}
\label{sec:intro}

Graph parameters like for instance the independence number, the clique number, the chromatic number and the domination number have been intensively studied in the literature. While one is usually interested in maximising or minimising such parameters, another interesting question one may ask is by how much one can decrease the value of a graph parameter by using a limited number of some predefined graph operations (like for instance vertex deletions or edge contractions). This leads to so-called {\sc blocker} problems which are defined as follows. For a fixed set $S$ of graph operations, a given graph $G=(V,E)$, two integers $k$ and $d$, and some graph parameter $\pi$, we want to know if we can transform $G$ into a graph $G'$ by using at most $k$ operations from the set $S$ and such that $\pi(G')\leq \pi(G)-d$. The integer $d$ is called the \emph{threshold}. 

 Over the last years, blocker problems have been investigated intensively in the literature (see for instance \cite{BBPR15,BTT11,BCPRW12,BCPRW11,CL20,CPW11,DPPR18, GLR21,GLMR23,GMR21a,GMR21b, HLW23,LM23,PRS94,PBP14,PPR16,PPR17,RCPWCZ10,ZRPWCB09}) and have relations to many other well known problems (like for instance {\sc Hadwiger Number}, {\sc Bipartite Contraction} and {\sc Maximum Induced Bipartite Subgraph}; see~\cite{DPPR18} for more examples). The graph parameters that have been considered were for instance the matching number (see~\cite{ZRPWCB09}), the chromatic number (see~\cite{PPR16, PPR17}), the (total or semitotal) domination number (see~\cite{GLR21}), the length of a longest path (see~\cite{PRS94}), the clique number (see~\cite{PPR16}), the weight of a minimum dominating set (see~\cite{PWBP15}), the vertex cover number (see~\cite{BTT11}) and the independence number (see~\cite{CPW11, DPPR18}). Regarding the graph operations, the set $S$ always consisted in a single operation: vertex deletion, edge deletion, edge contraction or edge addition. 

A related problem to the blocker problem is the so-called {\sc transversal} problem. Given a graph $G=(V,E)$, a property $\mathcal{P}$ and two integers $d$ and $k$, we want to know if there exists a set $V'\subseteq V$ (resp.\ a set $E'\subseteq E$) of size at most $k$ that intersects each set of vertices (resp.\ set of edges) satisfying property $\mathcal{P}$ on at least $d$ vertices (resp.\ $d$ edges). 
For example, if $\mathcal{P}$ corresponds to ``being a maximum independent set'', the transversal problem consists in asking whether one can find a set of at most~$k$ vertices which intersects every maximum independent set on at least $d$ vertices.  Another example is the well-known {\sc Feedback Vertex Set} problem. 
Here, we are interested in finding a subset of vertices of size at most~$k$ which intersects every cycle on at least one vertex. Thus, it is a transversal problem with $\mathcal{P}$ corresponding to ``being a cycle'' and $d=1$.

Transversal problems have also received much attention in the literature over the last years. 
Properties considered were for instance ``being a maximum independent set'' (see~\cite{BCPRW12, BCPRW11}), ``being a maximum matching'' (see~\cite{RCPWCZ10,ZRPWCB09}), ``being a maximal clique'' (see~\cite{LC06}) and ``being a cycle'' (see~\cite{BB02}). 

In this paper, we will focus on the blocker problem with $\pi$ being the independence number~$\alpha$ and $S$ consisting in a single operation, namely vertex deletion, as well as on the transversal problem with $\mathcal{P}$ corresponding to ``being a maximum independent set''. We formally define our problems as follows.
\begin{center}
\fbox{
\begin{minipage}{5.5in}
\del($\alpha$)
\begin{description}
\item[Instance:] A graph $G = (V,E)$ and two integers $d,k \geq 1$.
\item[Question:] Is there a set $S \subseteq V$ of cardinality $|S| \leq k$ such that $\alpha(G-S) \leq \alpha(G)-d$?
\end{description}
\end{minipage}}
\end{center}

We denote by $d$-\del($\alpha$) the problem \del($\alpha$), when $d$ is fixed. A \emph{$d$-deletion blocker} of $G = (V,E)$ of size $k$ is a set $ S \subseteq V$ with $|S| \leq k$ such that $\alpha(G-S) \leq \alpha(G)-d$. 

\begin{center}
\fbox{
\begin{minipage}{5.5in}
\trans($\alpha$)
\begin{description}
\item[Instance:] A graph $G = (V,E)$ and two integers $d,k \geq 1$.
\item[Question:] Is there a set $S \subseteq V$ of cardinality $|S| \leq k$ such that for every maximum independent set $I$ we have $|I\cap S| \geq d$?
\end{description}
\end{minipage}}
\end{center}

We denote by $d$-\trans($\alpha$) the problem \trans($\alpha$), when $d$ is fixed. A \emph{$d$-transversal} of $G = (V,E)$ of size $k$ is a set $S \subseteq V$ with $|S| \leq k$ such that for every maximum independent set $I$ we have $|I\cap S| \geq d$. 

Notice that for $d =1$, the problems $1$-\trans($\alpha$) and $1$-\del($\alpha$) are equivalent. This does not hold for $d >1$, as can be seen in Figure~\ref{f-blocker-vs-trans}. If we consider the clique number $\omega$, respectively the property ``being a maximum clique'', we can define in an analogous way the problems \del($\omega$) and \trans($\omega$). Since an independent set in a graph $G$ corresponds to a clique in the complement graph of $G$, it follows immediately that from any computational complexity result for \del($\alpha$) (resp.\ \trans($\alpha$)) in some graph class $\mathcal{G}$, we can deduce a corresponding computational complexity result for \del($\omega$) (resp.\ \trans($\omega$)) in the complement graph class of $\mathcal{G}$ and vice versa.

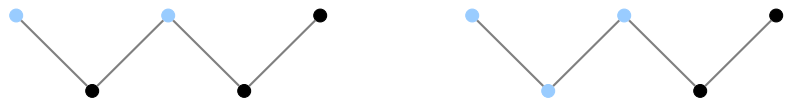
\begin{figure}
\begin{center}
\begin{tikzpicture}
\node[bvertex](v1) at (0,0){};
\node[vertex](v2) at (1,-1){};
\node[bvertex](v3) at (2,0){};
\node[vertex](v4) at (3,-1){};
\node[vertex](v5) at (4,0){};

\draw[edge](v1)--(v2);
\draw[edge](v2)--(v3);
\draw[edge](v3)--(v4);
\draw[edge](v4)--(v5);

\begin{scope}[shift = {(6,0)}]
\node[bvertex](v1) at (0,0){};
\node[bvertex](v2) at (1,-1){};
\node[bvertex](v3) at (2,0){};
\node[vertex](v4) at (3,-1){};
\node[vertex](v5) at (4,0){};

\draw[edge](v1)--(v2);
\draw[edge](v2)--(v3);
\draw[edge](v3)--(v4);
\draw[edge](v4)--(v5);

\end{scope}

\end{tikzpicture}
\caption{\label{f-blocker-vs-trans} A $2$-transversal of the graph $P_5$ (left) and a $2$-deletion blocker of $P_5$ (right), highlighted with light blue vertices. Both are of minimum size. Note that the $2$-deletion blocker is a $2$-transversal but the $2$-transversal is not a $2$-deletion blocker. }
\end{center}
\end{figure}

Since both problems are easily seen to be difficult in general graphs, much effort has been put on special graph classes. For \del($\alpha$), it has been shown that it is \np-complete in split graphs (see~\cite{CPW11}) and thus in chordal and perfect graphs. A recent improvement of this result states that it is \np-complete in chordal and thus in perfect graphs, even if $d= 1$ (see~\cite{LM23}), i.e.\ that $1$-\del($\alpha$) is  \np-complete in chordal graphs. On the positive side, \del($\alpha$) is polynomial time solvable in trees (see~\cite{BTT11, CPW11}), bipartite graphs (see~\cite{BTT11, CPW11}), cobipartite graphs (see~\cite{DPPR18}) and cographs (see~\cite{BTT11}). For split graphs, it has been shown that $d$-\del($\alpha$) is polynomial-time solvable (see~\cite{CPW11}). In a recent paper,  Hoang et al.\ show that it can be solved in polynomial time for interval graphs (see~\cite{HLW23}).

Regarding \trans($\alpha$), it was shown by Bentz et al.\ that this problem is solvable in polynomial time on trees (see~\cite{BCPRW11}) which was then improved to bipartite graphs in~\cite{BCPRW12}. Notice that the authors considered here even weighted maximum independent sets. From the fact that $1$-\trans($\alpha$) and $1$-\del($\alpha$) are equivalent, it follows from~\cite{LM23} that $1$-\trans($\alpha$) is \np-complete for chordal graphs, and thus for perfect graphs. Also, it follows from~\cite{CKL01} that $1$-\trans($\alpha$) is polynomial-time solvable in co-comparability graphs. 

%

In this paper, we will consider the class of co-comparability graphs, which is a subclass of the class of perfect graphs, and show that both problems \trans($\alpha$) and \del($\alpha$) are polynomial-time solvable in this graph class. This generalises the results of~\cite{CKL01} and~\cite{HLW23}. Notice that, as explained above, our results directly imply that \trans($\omega$) and \del($\omega$) can be solved in polynomial time in comparability graphs. In order to show our results, we will reduce both problems to the \mincut\ problem which can be solved in polynomial time (see~\cite{mincut}).



Our paper is structured as follows. Section~\ref{sec:prelim} contains notations and terminology. In Section~\ref{sec:indepsettrans}, we present some important properties of maximum independent sets in co-comparability graphs. Section~\ref{sec:trans} contains one of our main results stating that \trans($\alpha$) is solvable in polynomial time in co-comparability graphs. Sections~\ref{sec-structure-nonmax} and~\ref{sec-blocker} deal with the \del($\alpha$) problem. After introducing some more properties of independent sets in co-comparability graphs (Section~\ref{sec-structure-nonmax}), we show our second main result, namely that \del($\alpha$) is polynomial-time solvable in co-comparability graphs in Section~\ref{sec-blocker}. We finish with a conclusion and further research directions in Section~\ref{sec:conclusion}.


\section{Preliminairies}
\label{sec:prelim}

In this paper, we only consider finite graphs without self-loops and multiple edges. Unless specified otherwise, all graphs will be undirected and not necessarily connected. 
Let $G = (V,E)$ be a graph.  For $U \subseteq V$, we denote by $G[U]$ the subgraph of $G$ induced by $U$,  i.e.\ the graph with vertex set $U$ and edge set $\set{uv \in E|u,v \in U}$. 
We write $G-U = G[V\setminus U]$. We denote by $\overline{G}$ the \emph{complement} of $G$, that is the graph with vertex set $V$ and edge set $\overline{E} = \set{uv|u,v \in V, uv \notin E}$.
We define $N_G(v)$ as the \emph{neighbourhood} of a vertex $v\in V$ in $G$, i.e.\ the set of vertices $w\in V$ such that $vw\in E$. 

A vertex $v \in V$ is said to be \emph{complete} to some vertex set $U\subseteq V$, if $U \subseteq N_G(v)$.  A \emph{clique} is a set of pairwise adjacent vertices and the \emph{clique number $\omega$} denotes the size of a maximum clique of $G$. We call a vertex $v \in V$ \emph{independent} to some vertex set $U\subseteq V$, if $U\cap N_G(v) = \varnothing$. An \emph{independent set} in $G$ is a set of pairwise non-adjacent vertices of $G$. The \emph{independence number $\alpha$} is the size of a maximum independent set of $G$. For an independent set $S$, another independent set $I$ is called an \emph{extension} of $S$ if $S \subseteq I$. The set $S$ is then called extendable. If $I$ is a maximum independent set, we say it is a \emph{maximum extension} of $S$ and $S$ is \emph{max-extendable}.

We denote by $\intInterval{d}$, with $d \in \mathbb{N}^*$, the set $\{1, 2, \ldots, d\}$.

A \emph{transitive ordering} $\prec$ of $V$ is an ordering of the vertices such that if $u \prec v \prec w$ and $uv, vw \in E$, then $uw \in E$. A \emph{comparability} graph is a graph admitting a transitive ordering of its vertices. The complement of a comparability graph is called a \emph{co-comparability} graph. A transitive ordering of a comparability graph  gives an ordering on its complement with the following property.

\begin{property}
\label{prop-transitive}
Let $G=(V,E)$ be a co-comparability graph and let $\prec$ be a transitive ordering of $V$ in the comparability graph $\overline{G}$. Then, $\prec$ is an ordering of $V$ in $G$ such that if $u \prec v \prec w$ and $uv, vw \notin E$, then $uw \notin E$.
\end{property}

In the rest of the paper, whenever we consider an ordering $\prec$ of the vertices of a co-comparability graph~$G$, we mean an ordering satisfying Property~\ref{prop-transitive}, that is, it is a transitive ordering of its complementary comparability graph. From Property~\ref{prop-transitive}, we can directly deduce the following property.

\begin{property}
\label{prop-transitive2}
Let $G=(V,E)$ be a co-comparability graph and let $\prec$ be a transitive ordering of $V$ in the comparability graph $\overline{G}$. Let $u,v,w \in V$ with $v\prec w$, $uv,vw \notin E$ and $uw \in E$. Then $v \prec u$.
\end{property}

Consider a graph $G=(V,E)$ and an ordering $\prec$ of its vertex set $V$. Let $U \subseteq V$ and $v \in V\setminus U$. We say that $U \prec v$, if $u\prec v$ for every $u \in U$.

Let $G = (V,E)$ be a co-comparability graph with a vertex ordering $\prec$ and let $I$ be an independent set in $G$.  We denote by $\pos_I(v)$, the \emph{position of vertex $v$ in $I$}, that is $\pos_I(v)= | \set{u \in I\vert u \prec v} \vert+1$. A \emph{left extension} (resp.\ \emph{right extension}) of $v\in V$ is an independent set $I\subseteq V$, containing only vertices $u\in V$ with $u \prec v$ (resp.\ $v\prec u$) and such that $v$ is independent to $I$. We make the following easy observations.

\begin{observation}
\label{obs-extensions}
Let $G=(V,E)$ be a co-comparability graph with a vertex ordering $\prec$ and $u,v \in V$ with $u \prec v$. 
\begin{itemize}
\item Let $I_\ell^u$ be a left extension of $u$. Then, $I_\ell^u$ is a left extension of $v$. 
\item Let $I_r^v$ be a right extension of $v$. Then, $I_r^v$ is a right extension of $u$. 
\end{itemize}
\end{observation}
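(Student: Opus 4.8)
The plan is to verify, for each bullet, the two defining conditions of an extension: the \emph{positional} condition (every element of the set precedes, respectively succeeds, the relevant vertex) and the \emph{independence} condition (the relevant vertex is independent to the set). I would prove the first bullet in detail and then note that the second is entirely symmetric, obtained by reversing the order $\prec$ and exchanging the roles of ``left'' and ``right''.

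The positional condition is immediate. For the first bullet, every $w \in I_\ell^u$ satisfies $w \prec u$, and since $u \prec v$ and $\prec$ is a linear order, transitivity gives $w \prec v$; hence $I_\ell^u$ consists only of vertices preceding $v$. The substance of the argument is therefore the independence condition: I must show that $v$ is independent to $I_\ell^u$, i.e.\ that $wv \notin E$ for every $w \in I_\ell^u$. Fix such a $w$. Because $I_\ell^u$ is a left extension of $u$, the vertex $u$ is independent to $I_\ell^u$, so $wu \notin E$, and moreover $w \prec u \prec v$. I would then apply Property~\ref{prop-transitive} to the triple $w \prec u \prec v$: from $wu \notin E$ together with $uv \notin E$ it yields $wv \notin E$, which is exactly what is needed. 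The second bullet runs the same way: for $w \in I_r^v$ we have $v \prec w$ and $vw \notin E$, and applying Property~\ref{prop-transitive} to $u \prec v \prec w$ with $uv \notin E$ and $vw \notin E$ gives $uw \notin E$.

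The main obstacle is precisely this last application. The umbrella property of Property~\ref{prop-transitive} only fires when \emph{both} consecutive pairs are non-edges, so deducing $wv \notin E$ from $wu \notin E$ genuinely requires $uv \notin E$; without it the independence condition can fail (already on $P_3$ with $v$ the centre and $w \prec u \prec v$, the left extension $\{w\}$ of $u$ is not a left extension of $v$). Thus the real content of the step is securing that $u$ and $v$ are non-adjacent, which holds in the setting where the observation is applied, since there $u$ and $v$ are taken from a common independent set. Under that hypothesis the umbrella property applies and both bullets follow; equivalently, one can argue by contradiction through Property~\ref{prop-transitive2}, noting that assuming $wv \in E$ while $uv \notin E$ would make $u$ independent to both endpoints of the edge $wv$ and hence force $u \prec w$, contradicting $w \prec u$.
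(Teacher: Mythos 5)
You are correct, and your proposal is in fact more careful than the paper itself: the paper states this observation without any proof (it is presented as immediate, right after the definition of extensions), and the short argument it presumably has in mind is exactly yours --- the positional condition from transitivity of $\prec$, and the independence condition from Property~\ref{prop-transitive}. The important part of your proposal is the caveat, and it is right. As literally stated, the observation omits the hypothesis $uv \notin E$ and is then false: in your $P_3$ example with edge set $\{wv, uv\}$, the complement has the single edge $wu$, so every ordering of the three vertices --- in particular $w \prec u \prec v$ --- is a transitive ordering of the complement; then $\{w\}$ is a left extension of $u$ but not of $v$, because $wv \in E$. Property~\ref{prop-transitive} indeed only yields $wv \notin E$ from $wu \notin E$ \emph{together with} $uv \notin E$, so non-adjacency of $u$ and $v$ is genuinely needed. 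This flaw does not propagate into the paper's results: Observation~\ref{obs-extensions} is invoked only in the proofs of Property~\ref{prop-trans-orderingtopos} and Property~\ref{prop-blocker-partitionclique}, and in both places the pair of vertices to which it is applied is assumed non-adjacent, so the corrected statement (with $uv \notin E$ added to the hypotheses) suffices everywhere. Your proof of that corrected statement, including the symmetric treatment of the second bullet and the alternative route through Property~\ref{prop-transitive2}, is correct.
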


Let $G = (V,A)$ be a directed graph and let $s$, $t\in V$.  We call a directed path from $s$ to $t$ an \emph{$s$-$t$-path}. An \emph{$s$-$t$-cut} in $G$ is a vertex set $C \subseteq V \setminus \set{s,t}$, such that there is no $s$-$t$-path in $G-C$.  We will use the following problem.

\begin{center}
\fbox{
\begin{minipage}{5.5in}
\mincut
\begin{description}
\item[Instance:] A directed graph $G = (V,A)$ with two specified vertices $s,t$ and an integer $k \geq 0$.
\item[Question:] Is there an $s$-$t$-cut $C \subseteq V\setminus \set{s,t}$ with $|C| \leq k$?
\end{description}
\end{minipage}}
\end{center}


\section{Some Properties of Independent Sets in Co-Comparability Graphs}
\label{sec:indepsettrans}

In this section, we will present some structural properties of independent sets in co-comparability graphs. These properties are crucial for our proofs in Section \ref{sec:trans}, where we prove that \trans($\alpha$) is polynomial time solvable in this graph class. 

\begin{lemma}
\label{lemma-fixedposition}
Let $G = (V,E)$ be a co-comparability graph with a vertex ordering $\prec$. Let $I_1$, $I_2$ be two maximum independent sets in $G$.  Let $\alpha = \alpha(G)$ and let ${I_1 = \{u_1,\dots, u_\alpha \}}$  and $I_2 = \{v_1, \dots, v_\alpha \}$, where $u_1\prec \dots \prec u_\alpha$ and $v_1 \prec \dots \prec v_\alpha$.  Assume $\exists\ i, j \in \intInterval{\alpha}$, such that $u_i = v_j$. Then $i = j$.
\end{lemma}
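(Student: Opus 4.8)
The plan is to argue directly by building, from any shared vertex, an independent set whose cardinality is controlled by the difference of the two positions. Write $x = u_i = v_j$ for the common vertex. In $I_1$ the remaining vertices split into the left extension $L_1 = \set{u_1, \dots, u_{i-1}}$ of $x$ (the $i-1$ vertices preceding $x$) and the right extension $R_1 = \set{u_{i+1}, \dots, u_\alpha}$ of $x$ (the $\alpha - i$ vertices following $x$). Analogously, $I_2$ splits into $L_2 = \set{v_1, \dots, v_{j-1}}$ and $R_2 = \set{v_{j+1}, \dots, v_\alpha}$, of sizes $j-1$ and $\alpha - j$ respectively.

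The key step is to show that the ``splice'' $L_2 \cup \set{x} \cup R_1$ is an independent set. Both $L_2 \cup \set{x} \subseteq I_2$ and $\set{x} \cup R_1 \subseteq I_1$ are independent, so the only pairs that still need checking are the cross pairs consisting of one vertex $a \in L_2$ and one vertex $b \in R_1$. For such a pair we have $a \prec x \prec b$, and since $ax \notin E$ and $xb \notin E$, Property~\ref{prop-transitive} yields $ab \notin E$. Hence no cross pair is an edge, and $L_2 \cup \set{x} \cup R_1$ is indeed independent.

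Counting its size (the three parts are pairwise disjoint, as $L_2 \prec x \prec R_1$) gives $(j-1) + 1 + (\alpha - i) = \alpha + (j - i)$. Since $\alpha$ is the independence number of $G$, this forces $j - i \leq 0$, i.e.\ $j \leq i$. Swapping the roles of $I_1$ and $I_2$ (equivalently, splicing $L_1 \cup \set{x} \cup R_2$ instead) runs through the identical argument and yields $i \leq j$. Combining the two inequalities gives $i = j$, as required.

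I expect the only genuine subtlety to be spotting the right object to construct, namely combining the prefix of $I_2$ before $x$ with the suffix of $I_1$ after $x$ through the common vertex. Once this splice is identified, the non-adjacency of the cross pairs is an immediate application of the transitivity of $\prec$ (Property~\ref{prop-transitive}), and the size count closes the argument with no case analysis; the symmetry between $I_1$ and $I_2$ then supplies the reverse inequality for free.
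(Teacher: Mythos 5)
Your proof is correct and takes essentially the same approach as the paper: both construct the splice of the prefix $\set{v_1,\dots,v_{j-1}}$ of $I_2$ with the suffix $\set{u_i,\dots,u_\alpha}$ of $I_1$ through the common vertex, verify the cross pairs via Property~\ref{prop-transitive}, and derive the contradiction by counting. The only cosmetic difference is that the paper fixes $i<j$ by contradiction (implicitly using the symmetry of the roles of $I_1$ and $I_2$), whereas you obtain $j\leq i$ directly from the size bound and then invoke that symmetry explicitly for the reverse inequality.
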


\begin{proof}
Assume for a contradiction that there are $i, j \in  \intInterval{\alpha}$, $ i < j$, such that $u_i = v_j$. 
From Property~\ref{prop-transitive}, it follows that, since there is no edge $vu_i$ with $v \in \set{v_1,\dots, v_{j-1}}$ and no edge $u_iu$ with $u\in \set{u_{i+1}, \dots, u_\alpha}$, there is no edge $vu$ with $v\in \set{v_1,\dots, v_{j-1}}$  and $u \in \set{u_{i+1}, \dots, u_\alpha}$.
Hence, $\set{v_1,\dots, v_{j-1}, u_i, \dots, u_\alpha}$ is an independent set of size at least $\alpha + 1$,  a contradiction.
\end{proof}

So Lemma~\ref{lemma-fixedposition} tells us that in a co-comparability graph $G=(V,E)$, the position of a vertex $v$, which belongs to some maximum independent set, is the same in every maximum independent set containing~$v$. We denote by $\mathcal{I} \subseteq V$ the set of vertices which are contained in some maximum independent set. For any $v \in \mathcal{I}$, we then denote by $\pos(v)$ the position of $v$ in every maximum independent set it belongs to. Thus, for a vertex $v\in \mathcal{I}$, we have $\pos(v) = \pos_I(v)$, for any maximum independent set $I$ containing $v$.


%

Lemma~\ref{lemma-fixedposition} allows us to partition the vertices in $\mathcal{I}\subseteq V$ into sets $L_1,\dots, L_\alpha \subseteq \mathcal{I}$, where $\alpha = \alpha(G)$, such that for any $v\in \mathcal{I}$ we have $v \in L_p$, $p \in \intInterval{\alpha}$ if and only if there exists an independent set $\set{u_1,\dots, u_{p-1}, u_p = v, u_{p+1},\dots, u_\alpha}$ with $u_1 \prec \dots \prec u_\alpha$.  In other words, $L_p = \set{v \in \mathcal{I}| \pos(v) = p}$, for $p \in \intInterval{\alpha}$ (see also Figure \ref{f-trans-example}). These sets $L_p$ satisfy the following property.

\begin{property} \label{prop-trans-partition-clique}
Let $G = (V,E)$ be a co-comparability graph with a vertex ordering $\prec$.  Let $L_p$, for $p \in \intInterval{\alpha}$, be as defined above. Then, $L_p$ is a clique.
\end{property}

\begin{proof}
Assume for a contradiction that there exist $u,v \in L_p$ with $u \prec v$ such that $uv \notin E$. Let $I_r$ be a right extension of $v$ of maximum size. Let $I_\ell$ be a left extension of $u$ of maximum size. Then $v \prec I_r$ and $I_\ell \prec u$. From Property~\ref{prop-transitive}, we get that $I_\ell \cup \set{u,v} \cup I_r$ is an independent set which has size $(p-1) + 2 + (\alpha-p) = \alpha+1$, a contradiction.
\end{proof}

\begin{figure}[ht]
\begin{centering}
 \begin{tikzpicture} 

\tikzstyle{tedge}=[thin, draw = gray]

\tikzstyle{rahmen1}=[rounded corners = 5pt,draw, dashed, minimum width = 13pt, minimum height = 13pt]
\tikzstyle{rahmen2}=[rounded corners = 5pt,draw, dashed, minimum width = 13pt, minimum height = 40pt]
\tikzstyle{rahmen3}=[rounded corners = 5pt,draw, dashed, minimum width = 13pt, minimum height = 70pt]
\tikzstyle{nodename}=[font=\footnotesize]
          
          \begin{scope}[xscale=1.5]
    \node[bvertex] (a1) at (0,4.5){};
    \node[bvertex] (a2) at (0,3.5){};
    \node[bvertex] (a3) at (1,3){};
    \node[bvertex] (a4) at (1,4){};
    \node[bvertex] (a5) at (1,5){};
    \node[bvertex] (a6) at (2,3.5){};
    \node[bvertex] (a7) at (2,4.5){};
    \node[bvertex] (a8) at (3,4){};
    
    \node[vertex](a9) at (0.25, 3){};
    \node[vertex](a10) at (2.75, 3){};

    
    \draw[edge] (a1)--(a2);
    \draw[edge] (a1)--(a3);
    \draw[edge] (a1)--(a4);
    \draw[edge] (a3)--(a4);
    \draw[edge] (a3)to [bend left = 25](a5);
    \draw[edge] (a4)--(a5);
    \draw[edge] (a4)--(a6);
    \draw[edge] (a5)--(a6);
    \draw[edge] (a6)--(a7);
    
    \draw[edge] (a9)--(a1);
    \draw[edge] (a9)--(a2);
    \draw[edge] (a9)--(a3);
    \draw[edge] (a9)--(a4);
    \draw[edge] (a9)--(a5);
    
    \draw[edge] (a10)--(a8);
    \draw[edge] (a10)--(a7);
    \draw[edge] (a10)--(a6);
    \draw[edge] (a10)--(a3);


	\node[rahmen2]at (0,4){};	
	\node[rahmen3]at (1,4){};	
	\node[rahmen2]at (2,4){};
	\node[rahmen1]at (3,4){};		

\node[nodename] at (0,5){$L_{1}$};
\node[nodename] at (1,5.5){$L_{2}$};
\node[nodename] at (2,5){$L_{3}$};
\node[nodename] at (3,5){$L_{4}$};
\end{scope}

\end{tikzpicture}
\caption{\label{f-trans-example}A co-comparability graph and the sets $L_1, \dots, L_4$. The black vertices are not contained in any maximum independent set.}
\end{centering}
\end{figure}
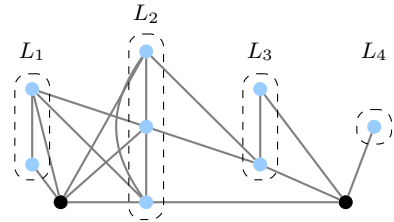

\begin{property}\label{prop-trans-orderingtopos}
Let $G= (V,E)$ be a co-comparability graph with a vertex ordering $\prec$. Let $u,v \in \mathcal{I}$ with $uv \notin E$. Then $\pos(u) < \pos(v)$ if and only if $u \prec v$.
\end{property}
\begin{proof}
Suppose for a contradiction that there are $u,v \in V$ with $uv \notin E, \pos(u) < \pos(v)$ and $v \prec u$. Let $I_\ell^v$ be a maximum left extension of $v$ and $I_\ell^u$ be a maximum left extension of $u$. From Observation~\ref{obs-extensions} we know that $I_\ell^v$ is also a left extension of $u$. Since $\pos(u) < \pos(v) = |I_\ell^v|+1 \leq |I_\ell^u| +1 = \pos(u)$ we get a contradiction.
\end{proof}

Our algorithm that we will present in Section \ref{sec:trans} relies on the sets $L_p$ defined above. Thus, it is important to be able to determine those sets in a co-comparability graph. The following result shows that this can be done in polynomial time.

\begin{lemma} \label{lem-trans-partition-poly}
Let $G = (V,E)$ be a co-comparability graph with a vertex ordering $\prec$. Then, the sets $L_1,\dots, L_\alpha$, where  $\alpha = \alpha(G)$, can be found in polynomial time.
\end{lemma}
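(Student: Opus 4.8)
The plan is to compute, for each vertex $v\in V$, whether $v$ belongs to some maximum independent set, and if so its unique position $\pos(v)$, which by Lemma~\ref{lemma-fixedposition} determines membership in the sets $L_p$. The key observation is that a vertex $v$ lies in a maximum independent set if and only if it can be assembled from a maximum left extension and a maximum right extension: by Property~\ref{prop-transitive}, if $I_\ell$ is a maximum left extension of $v$ and $I_r$ is a maximum right extension of $v$, then $I_\ell\cup\set{v}\cup I_r$ is an independent set. Hence I would define $\leftext(v)=|I_\ell|$ and $\rightext(v)=|I_r|$ as the sizes of maximum left and right extensions of $v$, and observe that $v\in\mathcal{I}$ exactly when $\leftext(v)+1+\rightext(v)=\alpha$, in which case $\pos(v)=\leftext(v)+1$.

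The main work is therefore computing $\leftext(v)$ and $\rightext(v)$ for all $v$ in polynomial time. First I would fix an ordering $\prec$ satisfying Property~\ref{prop-transitive}; such an ordering is computable in polynomial time since a transitive orientation of a comparability graph can be found efficiently. I would then set up a dynamic program over $\prec$: for each $v$, a maximum left extension consists of an independent set of vertices all $\prec v$ and all non-adjacent to $v$, so $\leftext(v)$ equals one plus the maximum of $\leftext(u)$ over all $u\prec v$ with $uv\notin E$ that are themselves endpoints of such chains, with the convention $\leftext(v)=0$ when no such $u$ exists. Processing the vertices in increasing $\prec$-order, each value $\leftext(v)$ is obtained by examining the already-computed values of its predecessors, giving an $O(|V|^2)$ procedure. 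The value $\rightext(v)$ is computed symmetrically by processing in decreasing order. Finally $\alpha=\max_v(\leftext(v)+1+\rightext(v))$, and the sets $L_p$ are read off by collecting all $v$ with $\leftext(v)+1+\rightext(v)=\alpha$ into the class indexed by $p=\leftext(v)+1$.

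The step I expect to require the most care is justifying that the greedy/dynamic-programming recurrence for $\leftext$ is correct, namely that a maximum left extension of $v$ can always be extended from a maximum left extension of some feasible predecessor $u$. This relies on Observation~\ref{obs-extensions}, which guarantees that a left extension of $u$ remains a left extension of any $v\succ u$, and on Property~\ref{prop-transitive} to ensure the assembled set is genuinely independent. One must verify that taking the best predecessor value does not miss a longer chain whose penultimate vertex is not itself optimal among all predecessors of $v$ — but this follows because any left extension of $v$ ending in some vertex $u$ is, after removing $u$, a left extension of $u$, so its size is bounded by $\leftext(u)$, confirming that the recurrence attains the true maximum. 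The remaining bookkeeping — that the positions are consistent across all maximum independent sets — is exactly the content of Lemma~\ref{lemma-fixedposition}, so no additional argument is needed there.
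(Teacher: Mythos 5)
Your proposal is correct and follows essentially the same route as the paper's own proof: both define $\leftext(v)$ and $\rightext(v)$ as the sizes of maximum left/right extensions, compute them by the same dynamic-programming recurrences over the ordering $\prec$ in $\mathcal{O}(|V|^2)$ time, and recover $L_i$ as the set of vertices with $\leftext(v)+\rightext(v)+1=\alpha$ and $\leftext(v)+1=i$. Your explicit justification that the recurrence cannot miss a longer chain (by removing the last vertex of a left extension) is a detail the paper leaves implicit, but it is the same argument in substance.
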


\begin{proof}
First notice that it follows from Property \ref{prop-transitive} that if $I_\ell^u$ is a left extension of some vertex $u\in V$ and $I_r^u$ is a right extension of this same vertex $u\in V$, then $I_\ell^u\cup \{u\}\cup I_r^u$ is an independent set in $G$ containing $u$. Furthermore, any independent set containing $u$ consists of a left extension of $u$, a right extension of $u$, and $u$ itself. 
Thus, the size of a maximum independent set in $G$ containing some vertex $u$ can be obtained by determining the maximum left respectively right extension of $u$.  Let us define the two functions $\leftext(\cdot)$ and $\rightext(\cdot)$ as follows: $\leftext(v)$, for $v\in V$, is the size of a maximum left extension of $v$; and $\rightext(v)$, for $v\in V$, is the size of a maximum right extension of $v$.
 From the above, it follows that $\leftext(v) + \rightext(v)+1$ corresponds to the size of a maximum independent set of $G$ which contains $v$. Furthermore, the value $\leftext(v)+1$ gives us the position of $v$ in a maximum independent set $I$ containing $v$, i.e.\ $\pos_I(v)=\leftext(v)+1$. We can also determine $\alpha$, since $\alpha = \max_{v\in V} (\leftext(v) + \rightext(v)+1)$.

From the above, it follows that once we determined $\leftext(v)$ and $\rightext(v)$ for every vertex $v\in V$, we obtain the sets $L_1,\dots, L_\alpha$, since 
\[L_i=\{v\in V|\ \leftext(v)+\rightext(v)+1=\alpha \textit{ and} \; \leftext(v)+1=i\}.\]

Thus, it remains to show how we can compute the functions $\leftext(\cdot)$ and $\rightext(\cdot)$.  Herefore, we use the ordering $\prec$ of the vertices and we get that
\begin{align*}
\leftext(v) &= \max_{u \in V, u \prec v, uv \notin E} (\leftext(u)+1)\\
\rightext(v) & = \max_{u \in V, v\prec u, uv \notin E} (\rightext(u)+1)
\end{align*}

Suppose $V=\{v_1,\ldots,v_n\}$ and $v_1\prec v_2 \prec \cdots \prec v_n$. Clearly, $\leftext(v_1)=\rightext(v_n)=0$. By iterating through the vertices in increasing order with respect to $\prec$, we can calculate the values of $\leftext(v)$, and similarly by iterating in decreasing order we get $\rightext(v)$, for all vertices $v\in V$. Both functions can therefore clearly be computed for all vertices in $\mathcal{O}(|V|^2)$ time. Hence we can find the partition in polynomial time.
\end{proof}


\section{Transversals in Co-Comparability Graphs}
\label{sec:trans}

In this section, we present a polynomial-time algorithm to solve  \trans($\alpha$) in co-comparability graphs. Let $G = (V,E)$ be a co-comparability graph with a vertex ordering~$\prec$, independence number $\alpha = \alpha(G)$ and let $d>0$ be an integer.  
We will construct a directed graph $G' = (V',A')$ such that $(G',k)$ is a \yes-instance of \mincut \ if and only if $(G,d,k)$ is a \yes-instance of \trans($\alpha$) for some integer $k>0$. This equivalence will be shown in Theorem~\ref{theo-transalpha}. Since $G'$ can be constructed in polynomial time (see Theorem~\ref{theo-transalpha}) and since we know that  \mincut \ can be solved in polynomial time (see~\cite{mincut}), we obtain our result.

We first make an important observation.

\begin{observation}\label{obs-trans-intersection}
Let $G = (V,E)$ be a co-comparability graph with a vertex ordering $\prec$. Let $S \subseteq V$. Then $S$ is a $d$-transversal of $G$ if and only if it contains at least one vertex from every max-extendable independent set of size $\alpha-d+1$ of $G$.
\end{observation}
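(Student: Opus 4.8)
The plan is to prove both implications by a direct counting argument comparing the three cardinalities $|I| = \alpha$, $|T| = \alpha - d + 1$, and $|I \cap S|$. Notably, the co-comparability structure plays no role here, so I would not invoke Property~\ref{prop-transitive} or the sets $L_p$; the statement is really a purely combinatorial fact that happens to be recorded in this context.

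For the forward direction, suppose $S$ is a $d$-transversal and let $T$ be any max-extendable independent set of size $\alpha - d + 1$. By the definition of max-extendability, $T$ extends to some maximum independent set $I$ with $T \subseteq I$, so $|I \setminus T| = \alpha - (\alpha - d + 1) = d - 1$. Since $S$ is a $d$-transversal, $|I \cap S| \geq d$. As at most $d - 1$ of the vertices of $I \cap S$ can lie in $I \setminus T$, at least one must lie in $T$, giving $S \cap T \neq \varnothing$. Hence $S$ meets every such $T$.

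For the converse, I would argue by contraposition. Suppose $S$ is not a $d$-transversal, so there is a maximum independent set $I$ with $|I \cap S| \leq d - 1$. Then $|I \setminus S| \geq \alpha - (d-1) = \alpha - d + 1$, so we may pick a subset $T \subseteq I \setminus S$ with $|T| = \alpha - d + 1$. Being a subset of the independent set $I$, the set $T$ is independent; and since $T \subseteq I$ with $I$ maximum, $T$ is max-extendable; moreover $T \cap S = \varnothing$. Thus $S$ fails to meet some max-extendable independent set of size $\alpha - d + 1$, which is exactly the contrapositive of the claimed implication.

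There is no genuine obstacle here beyond keeping the cardinality bookkeeping straight; the only point worth stating explicitly is that every subset of a maximum independent set is itself max-extendable, which is what legitimises the choice of $T$ in the converse. (Implicitly one assumes $d \leq \alpha$, since otherwise no set of size $\alpha - d + 1 \geq 1$ exists and, equally, no $d$-transversal can exist.)
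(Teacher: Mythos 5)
Your proof is correct. The paper states this result as an \emph{observation} and supplies no proof at all, treating it as an immediate consequence of the definitions; your counting argument (forward direction via $|I\setminus T|=d-1$, converse by selecting $T\subseteq I\setminus S$ of size $\alpha-d+1$ inside a violating maximum independent set $I$) is precisely the routine verification the authors leave implicit, and you are right that the co-comparability structure, Property~\ref{prop-transitive} and the sets $L_p$ play no role here. Your side remark that every subset of a maximum independent set is max-extendable, and the caveat that $d\leq\alpha$ is implicitly assumed, are both accurate and worth making explicit.
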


Observation~\ref{obs-trans-intersection} tells us that, in order to obtain a $d$-transversal in $G$, we must intersect all max-extendable independent sets of size $\alpha-d+1$ in at least one vertex. 
Therefore, in the following, we will construct a directed graph $G'$ with a source $s$ and a sink $t$, such that every $s$-$t$-path in $G'$ corresponds to a max-extendable independent set of size $\alpha-d+1$ in~$G$. 
This one-to-one correspondence will be proven in Lemma~\ref{indsetsarepaths}. An $s$-$t$-cut in $G'$ will then correspond to a $d$-transversal in $G$ (see Theorem \ref{theo-transalpha}).

Let us now describe the construction of $G'=(V',A')$. Let $\mathcal{I} = \bigcup_{p \in \intInterval{\alpha}} L_p$ be the set of vertices contained in a maximum independent set. 
The vertex set $V'$ of $G'$ consists of $d$~copies $U_1,\dots, U_d$ of $\mathcal{I}$ and two additional vertices $s,t$, that is, $V' = \bigcup_{\ell \in \intInterval{d}} U_\ell \cup \set{s,t}$. 
We denote by $L_{p,\ell}$ the set of vertices in $U_\ell$ that correspond to vertices of $L_p$ in $G$, for $p \in \intInterval{\alpha}$ and $\ell\in \intInterval{d}$, and we say that $\ell$ is the \emph{level} of the vertices in $U_\ell$, denoted by $\level(x) = \ell$ for $ x \in U_\ell$.
 Recall that $\pos(v)$, for $v\in \mathcal{I}$, is the position of $v$ in every maximum independent set it belongs to in $G$. 
For simplicity, we will adopt this same notion for all vertices in $V'\setminus \{s,t\}$, i.e.\ for every vertex $x\in V'\setminus\{s,t\}$ that corresponds to some vertex $v\in L_p$, for $p\in \intInterval{\alpha}$, we will also use $\pos(x)$ and call it the position of $x$ in order to actually refer to the position of $v$, the vertex that $x$ corresponds to in $G$. 
Furthermore, we set $\pos(s) = 0$ and $\level(s) = 1$ as well as $\pos(t) = \alpha+1$ and $\level(t) = d$, in order to simplify the readability of our proofs.

Let $x,y \in V'\setminus \set{s,t}$, where $x \in L_{p,\ell}$ and $y \in L_{p',\ell'}$, with $p, p' \in \intInterval{\alpha}$ and $\ell,\ell' \in \intInterval{d}$. Let $u,v \in \mathcal{I}$, where $x$ corresponds to $u$ and $y$ corresponds to $v$.  We add an arc $(x,y)$, if $\set{u,v}$ is max-extendable in~$G$ and $p' = p+g+1$, $\ell' = \ell+g$ for some integer $g \geq 0$. Finally, for any vertex $x\in L_{p,\ell}$, with $p\in \intInterval{\alpha}$ and $\ell\in \intInterval{d}$,  we add an arc $(s,x)$ if $p = \pos(s) + g+1 = g+1$, $\ell = \level(s) + g = g+1$, for some integer $g \geq 0$, and we add an arc $(x,t)$ if $\pos(t) = \alpha+1 = p + g + 1$, $\level(t) = d = \ell + g$ for some integer $g \geq 0$.

We make the following observations which immediately follow from the definition of $G'=(V',A')$.

\begin{observation}
\label{obs-trans-construction}
Let $G'=(V',A')$ be the directed graph constructed from a co-comparabililty graph $G$ as described above. For any arc $(x,y)\in A'$, we have 
\begin{itemize}
\item[(a)] $\pos(y) > \pos(x)$;
\item[(b)] $\level(y) \geq \level(x)$;
\item[(c)] $\pos(y) - \pos(x) -1= \level(y) - \level(x)$. 
\end{itemize}
\end{observation}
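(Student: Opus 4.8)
The plan is a direct case analysis over the three kinds of arcs in the construction of $G'$, exploiting the fact that all three are governed by the same pair of defining equations. The unifying observation is this: for every arc $(x,y) \in A'$, writing the tail's data as $(\pos(x), \level(x))$ and the head's as $(\pos(y), \level(y))$, the construction guarantees
\[
\pos(y) = \pos(x) + g + 1 \qquad\text{and}\qquad \level(y) = \level(x) + g
\]
for some integer $g \geq 0$. Once this single identity is extracted in each case, all of (a)--(c) fall out by elementary arithmetic. Note that the max-extendability clause in the arc definition plays no role in these three properties, which depend only on the position and level indices; so it can be ignored throughout.

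First I would treat an arc $(x,y)$ with $x,y \in V' \setminus \set{s,t}$, say $x \in L_{p,\ell}$ and $y \in L_{p',\ell'}$. By definition such an arc exists only when $p' = p + g + 1$ and $\ell' = \ell + g$ for some $g \geq 0$; since $\pos(x) = p$, $\pos(y) = p'$, $\level(x) = \ell$ and $\level(y) = \ell'$, these are exactly the two defining equations. Next I would handle the arcs incident to $s$ and $t$, where the only subtlety is to use the conventions $\pos(s) = 0$, $\level(s) = 1$, $\pos(t) = \alpha+1$, $\level(t) = d$ fixed in the construction. For an arc $(s,y)$ with $y \in L_{p,\ell}$, the condition $p = \pos(s) + g + 1$ and $\ell = \level(s) + g$ again yields the same two equations with $x$ replaced by $s$; symmetrically, for an arc $(x,t)$ with $x \in L_{p,\ell}$, the condition $\pos(t) = p + g + 1$ and $\level(t) = \ell + g$ yields them with $y$ replaced by $t$.

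With the identity $\pos(y) = \pos(x) + g + 1$, $\level(y) = \level(x) + g$ established in every case, property (a) follows from $\pos(y) - \pos(x) = g + 1 \geq 1 > 0$; property (b) follows from $\level(y) - \level(x) = g \geq 0$; and property (c) follows by subtraction, since $\pos(y) - \pos(x) - 1 = g = \level(y) - \level(x)$. I expect no genuine obstacle here: the statement is a bookkeeping exercise, and the only point requiring care is to check that the source/sink conventions for $\pos$ and $\level$ make the boundary arcs obey the same two equations as the interior arcs, so that all three cases close with a single computation rather than three separate ones.
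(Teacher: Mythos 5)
Your proof is correct and matches the paper's reasoning: the paper states this observation without proof, as following "immediately from the definition" of $G'$, and your case analysis simply spells out that bookkeeping. Extracting the unified identity $\pos(y)=\pos(x)+g+1$, $\level(y)=\level(x)+g$ across internal, source, and sink arcs (using the conventions $\pos(s)=0$, $\level(s)=1$, $\pos(t)=\alpha+1$, $\level(t)=d$) is exactly the intended argument.
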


Figures~\ref{f-trans-galpha} and \ref{f-trans-gprime} show the graph $G'$ constructed from the graph $G$ in Figure~\ref{f-trans-example} for $d=1$ (Figure \ref{f-trans-galpha}) and $d=2$ (Figure \ref{f-trans-gprime}). 

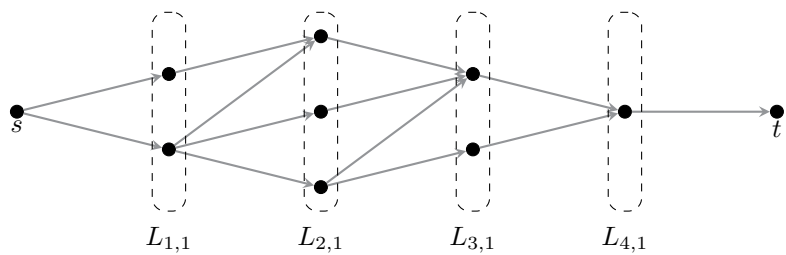
\begin{figure}[ht]
\begin{center}
\centering
  \begin{tikzpicture} 

\tikzstyle{rahmen}=[rounded corners = 5pt,draw, dashed, minimum width = 12.5pt, minimum height = 75pt]
\tikzstyle{nodename}=[font=\normalsize]

	\node[vertex] (s) at (-2,4){};
          
    \node[vertex] (a1) at (0,4.5){};
    \node[vertex] (a2) at (0,3.5){};

    \node[vertex] (a3) at (2,3){};
    \node[vertex] (a4) at (2,4){};
    \node[vertex] (a5) at (2,5){};
    \node[vertex] (a6) at (4,4.5){};
    \node[vertex] (a7) at (4,3.5){};
    
    \node[vertex] (a8) at (6,4){};
    
    \node[vertex] (t) at (8,4){};

    \draw[arc] (a1)--(a5);
    \draw[arc] (a2)--(a5);
    \draw[arc] (a2)--(a4);
    \draw[arc] (a2)--(a3);
    \draw[arc] (a5)--(a6);
    \draw[arc] (a4)--(a6);
    \draw[arc] (a3)--(a6);
    \draw[arc] (a3)--(a7);
    \draw[arc] (a6)--(a8);
    \draw[arc] (a7)--(a8);
    \draw[arc] (s)--(a1);
    \draw[arc] (s)--(a2);
    \draw[arc] (a8)--(t);
  
  \node[rahmen] (l1) at (0,4){};
  \node[rahmen] (l2) at (2,4){};
  \node[rahmen] (l3) at (4,4){};
  \node[rahmen] (l4) at (6,4){};
  
  \node[nodename] at (0,2.3){$L_{1,1}$};
  \node[nodename] at (2,2.3){$L_{2,1}$};
  \node[nodename] at (4,2.3){$L_{3,1}$}; 
  \node[nodename] at (6,2.3){$L_{4,1}$};

\node[nodename, , below] at (-2,4){$s$};
\node[nodename, , below] at (8,4){$t$};

\end{tikzpicture}
\caption{\label{f-trans-galpha}The graph $G'$ constructed from the graph $G$ from Figure \ref{f-trans-example} for $d = 1$. The level of all vertices is 1.}
\end{center}
\end{figure}

\begin{figure}[ht]
\begin{center}
\centering
  \begin{tikzpicture} 
\tikzstyle{vertex}=[thin,circle,inner sep=0.cm, minimum size=1mm, fill=black, draw=black]

\tikzstyle{rahmenb}=[rounded corners = 5pt,draw, dashed, minimum width = 200pt, minimum height = 35pt]

\tikzstyle{vertexsetb}=[rounded corners = 4pt,draw, minimum width = 10pt, minimum height = 30pt]

\node[vertexsetb](L11a) at (2,1){};
\node[vertexsetb](L21a) at (4,1){};
\node[vertexsetb](L31a) at (6,1){};
\node[vertexsetb](L41a) at (8,1){};

\node[vertex](v11) at (2,1.2){};
\node[vertex](v21) at (2,0.8){};
\node[vertex](v31) at (4,0.6){};
\node[vertex](v41) at (4,1){};
\node[vertex](v51) at (4,1.4){};
\node[vertex](v61) at (6,1.2){};
\node[vertex](v71) at (6,0.8){};
\node[vertex](v81) at (8,1){};

\node[vertex](v12) at (2,3.2){};
\node[vertex](v22) at (2,2.8){};
\node[vertex](v32) at (4,2.6){};
\node[vertex](v42) at (4,3){};
\node[vertex](v52) at (4,3.4){};
\node[vertex](v62) at (6,3.2){};
\node[vertex](v72) at (6,2.8){};
\node[vertex](v82) at (8,3){};

\node[vertexsetb](L12a) at (2,3){};
\node[vertexsetb](L22a) at (4,3){};
\node[vertexsetb](L32a) at (6,3){};
\node[vertexsetb](L42a) at (8,3){};

    \node[vertex] (s) at (0,1){};
    \node[vertex] (t) at (10,3){};

    \draw[arc] (s)--(v11);
    \draw[arc] (s)--(v21);
    \draw[arc] (s)--(v32);
    \draw[arc] (s)--(v42);
    \draw[arc] (s)--(v52);

    \draw[arc] (v61)--(t);
    \draw[arc] (v71)--(t);
    \draw[arc] (v82)--(t);
    
    \draw[arc] (v11)--(v51);
    \draw[arc] (v12)--(v52);
    \draw[arc] (v21)--(v31);
    \draw[arc] (v22)--(v32);
    \draw[arc] (v21)--(v41);
    \draw[arc] (v22)--(v42);
    \draw[arc] (v21)--(v51);
    \draw[arc] (v22)--(v52);
    \draw[arc] (v31)--(v61);
    \draw[arc] (v31)--(v71);
    \draw[arc] (v32)--(v72);
    \draw[arc] (v32)--(v62);
    \draw[arc] (v41)--(v61);
    \draw[arc] (v42)--(v62);
    \draw[arc] (v51)--(v61);
    \draw[arc] (v52)--(v62);
    \draw[arc] (v61)--(v81);
    \draw[arc] (v62)--(v82);
    \draw[arc] (v71)--(v81);
    \draw[arc] (v72)--(v82);

    \draw[arc] (v11)--(v62);
    \draw[arc] (v21)--(v62);
    \draw[arc] (v21)--(v72);
    \draw[arc] (v31)--(v82);
    \draw[arc] (v41)--(v82);
    \draw[arc] (v51)--(v82);

\node[font = \small, , below] at (0,1){$s$};
\node[font = \small, , above] at (10,3){$t$};	
\node[font = \scriptsize, below] at (2,0.45){$L_{1,1}$};
\node[font = \scriptsize, below] at (4,0.45){$L_{2,1}$};
\node[font = \scriptsize, below] at (6,0.45){$L_{3,1}$};
\node[font = \scriptsize, below] at (8,0.45){$L_{4,1}$};
\node[font = \scriptsize, above] at (2,3.55){$L_{1,2}$};
\node[font = \scriptsize, above] at (4,3.55){$L_{2,2}$};
\node[font = \scriptsize, above] at (6,3.55){$L_{3,2}$};
\node[font = \scriptsize, above] at (8,3.55){$L_{4,2}$};

\node[font = \small ] at (9.25,3.55){level $2$};
\node[font = \small ] at (9.25,1.5){level $1$};

\node[rahmenb] at (5,1){};
\node[rahmenb] at (5,3){};

\end{tikzpicture}
\caption{\label{f-trans-gprime} The graph $G'$  constructed from the graph $G$ from Figure \ref{f-trans-example} for $d = 2$.  }
\end{center}
\end{figure}
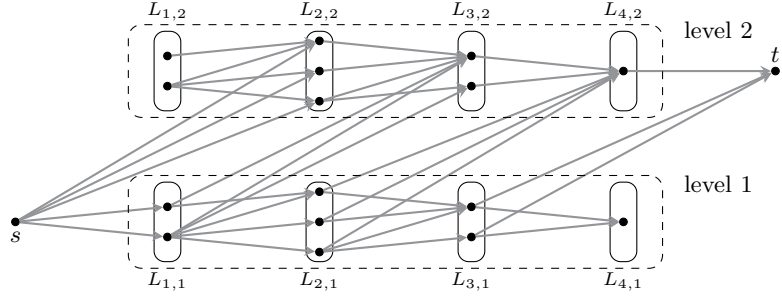

Before we show the one-to-one correspondence between max-extendable independent sets of size $\alpha-d+1$ in $G$ and $s$-$t$-paths in $G'$, we present some useful properties.

\begin{property}
\label{prop-trans-emptypositionspath}
Let $P $ be an $s$-$t$-path in $G'$ with vertices $s,x_1,\dots, x_h, t$ in that order. There exist exactly $d-1$ distinct integers in $\intInterval{\alpha}$, say $g_1,\ldots, g_{d-1}$, such that $\pos(x_i)\neq g_1,\ldots,g_{d-1}$ for all $i\in \intInterval{h}$.
For any other integer $g \in \intInterval{\alpha}\setminus \set{g_1,\dots, g_{d-1}}$,  there exists exactly one vertex $x \in V(P)\setminus \{s,t\}$ such that $\pos (x)=g$. 
\end{property}

\begin{proof}
Let $P $ be an $s$-$t$-path in $G'$ with vertices  $s,x_1,\dots, x_h, t$ in that order. Consider an arc $(x_i,x_{i+1})$ in $P$.  Assume that $x_i \in L_{p,\ell}$ and $x_{i+1}\in L_{p',\ell'}$, with $p,p' \in \intInterval{\alpha}$ and $\ell,\ell'\in \intInterval{d}$. 
From Observation~\ref{obs-trans-construction}(c), it follows that $p' = p+ 1 + \ell'-\ell$ and by Observation~\ref{obs-trans-construction}(b), we know that $\ell'-\ell\geq 0$. 
Hence, we skip $\ell'-\ell$ positions between $x_i$ and $x_{i+1}$, which will not be used by any vertex in $P$, that is, there are $\ell'-\ell$ integers between $p$ and $p'$ which do not correspond to any position of some vertex in $P$. 
Since we start at level~$1$ (recall that $\level(s) = 1$) and we end at level $d$ (recall that $\level(t) = d$), we get that there are exactly $d-1$ distinct integers in $\intInterval{\alpha}$, say $g_1,\ldots, g_{d-1}$, such that $\pos(x_j)\neq g_1,\ldots,g_{d-1}$ for all $j\in \intInterval{h}$. 
Furthermore, since $\pos(x_j') > \pos(x_j)$, for any $j'>j$, with $j,j'\in \intInterval{h}$ (see Observation~\ref{obs-trans-construction}(a)), it is obvious to see that for any other integer $g \in \intInterval{\alpha}\setminus \set{g_1,\dots, g_{d-1}}$, there exists exactly one vertex $x \in V(P)\setminus \set{s,t}$ such that $\pos(x) = g$. 
\end{proof}

The next property gives the exact number of vertices in any $s$-$t$-path in $G'$.

\begin{property}
\label{prop-trans-pathlength}
Every $s$-$t$-path $P$ in $G'$ contains $\alpha-d+3$ vertices.
\end{property}

\begin{proof}
From Property \ref{prop-trans-emptypositionspath}, we know that there are $d-1$ positions that do not correspond to any position of some vertex in $P$ and that all other positions do correspond each to a different position of some vertex in $V(P) \setminus \set{s,t}$.  Since there are in total $\alpha$ possible positions of which none corresponds to the positions of $s$ and $t$ (recall that $\pos(s)=0$ and $\pos(t)=\alpha+1$), there are $\alpha-d+1$ vertices in $V(P)\setminus \set{s,t}$, and hence, $P$ contains exactly $\alpha-d+3$ vertices.
\end{proof}

The following property immediately follows from the definition of the position of a vertex in $\mathcal{I}$ and Observation~\ref{obs-trans-construction}(a).

\begin{property}
\label{prop-trans-emptypositionsindset}
Let $I = \set{v_1,\dots, v_{\alpha-d+1}}$ be a max-extendable independent set in $G$. There exist exactly $d-1$ distinct integers in $\intInterval{\alpha}$, say $g_1,\ldots, g_{d-1}$ such that $\pos(v_i)\neq g_1,\ldots,g_{d-1}$ for all $i\in \intInterval{\alpha-d+1}$.
\end{property}

We are now ready to show the one-to-one correspondence between $s$-$t$-paths in $G'$ and max-extendable independents sets of size $\alpha-d+1$ in $G$.

\begin{lemma}
\label{indsetsarepaths}
Let $G=(V,E)$ be a co-comparability graph with a vertex ordering $\prec$. Let $\alpha = \alpha(G)$, $d> 0$ be an integer and consider $G'=(V',A')$,  constructed as described above. Then, every max-extendable independent set of size $\alpha-d+1$ in $G$ corresponds to an $s$-$t$-path in $G'$ and vice versa.
\end{lemma}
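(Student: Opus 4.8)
The plan is to exhibit two maps, one sending each $s$-$t$-path in $G'$ to a max-extendable independent set of size $\alpha-d+1$ in $G$ and one going in the opposite direction, and then to check that they are mutually inverse. Throughout I identify every vertex $x\in V'\setminus\set{s,t}$ with the vertex of $\mathcal{I}$ that it is a copy of, so that $\pos$ has a consistent meaning on both sides. For the first direction, let $P=s,x_1,\dots,x_h,t$ be an $s$-$t$-path and let $u_i\in\mathcal{I}$ be the vertex that $x_i$ corresponds to. By Property~\ref{prop-trans-pathlength} we have $h=\alpha-d+1$, and by Observation~\ref{obs-trans-construction}(a) positions strictly increase along $P$, so the $u_i$ occupy pairwise distinct positions and hence are $\alpha-d+1$ distinct vertices. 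For each arc $(x_i,x_{i+1})$ the pair $\set{u_i,u_{i+1}}$ is max-extendable by construction of $A'$; let $J_i$ be a maximum independent set containing it and let $M_i\subseteq J_i$ consist of the vertices of $J_i$ whose positions lie strictly between $\pos(u_i)$ and $\pos(u_{i+1})$ (by Observation~\ref{obs-trans-construction}(c) there are exactly $\level(x_{i+1})-\level(x_i)$ of them, one per skipped position). Likewise the arc $(s,x_1)$ yields a maximum left extension $I_\ell$ of $u_1$ filling positions $1,\dots,\pos(u_1)-1$, and the arc $(x_h,t)$ a maximum right extension $I_r$ of $u_h$ filling positions $\pos(u_h)+1,\dots,\alpha$.

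I then assemble $W=I_\ell\cup\set{u_1}\cup M_1\cup\set{u_2}\cup\dots\cup M_{h-1}\cup\set{u_h}\cup I_r$. A short count shows that $W$ contains exactly one vertex of each position $1,\dots,\alpha$, so $|W|=\alpha$, and it remains to prove that $W$ is independent; this is the crux of the argument. The key observation is that every pair of vertices of $W$ that is consecutive in position order lies inside one of the independent sets $I_\ell\cup\set{u_1}$, $J_1,\dots,J_{h-1}$, $\set{u_h}\cup I_r$, these overlapping precisely at the vertices $u_i$, whose position in $J_{i-1}$ and in $J_i$ agrees by Lemma~\ref{lemma-fixedposition}; such a pair is therefore non-adjacent. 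Since all vertices of $W$ lie in $\mathcal{I}$, Property~\ref{prop-trans-orderingtopos} converts position order into the order $\prec$, so each position-consecutive pair is both non-adjacent and $\prec$-ordered. An induction on the position gap, applying the transitivity of non-edges (Property~\ref{prop-transitive}) along this chain, then shows that all pairs of $W$ are non-adjacent. Hence $W$ is a maximum independent set containing $\set{u_1,\dots,u_h}$, so the latter is a max-extendable independent set of size $\alpha-d+1$.

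For the second direction, let $I=\set{v_1,\dots,v_{\alpha-d+1}}$ be max-extendable with $v_1\prec\dots\prec v_{\alpha-d+1}$. By Property~\ref{prop-trans-orderingtopos} these vertices are also ordered by position, and by Property~\ref{prop-trans-emptypositionsindset} they occupy all of $\intInterval{\alpha}$ except $d-1$ skipped positions $g_1,\dots,g_{d-1}$. I send $v_i$ to its copy $x_i$ at level $\level(x_i)=1+|\set{j:g_j<\pos(v_i)}|$ and claim that $s,x_1,\dots,x_{\alpha-d+1},t$ is an $s$-$t$-path. This is a direct verification of the three arc conditions: the levels lie in $\intInterval{d}$; between $v_i$ and $v_{i+1}$ every intermediate position is skipped, so $\pos(v_{i+1})-\pos(v_i)-1$ equals $\level(x_{i+1})-\level(x_i)$ and $\set{v_i,v_{i+1}}\subseteq I$ is max-extendable, which gives the internal arcs through Observation~\ref{obs-trans-construction}(c); finally all positions before $v_1$ and after $v_{\alpha-d+1}$ are skipped, so $\level(x_1)=\pos(v_1)$ and $\level(x_{\alpha-d+1})+(\alpha-\pos(v_{\alpha-d+1}))=d$, yielding the arcs $(s,x_1)$ and $(x_{\alpha-d+1},t)$.

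It remains to note that the two maps are mutually inverse: composing set-to-path with path-to-set returns $I$ since $x_i$ corresponds to $v_i$, while composing path-to-set with set-to-path returns $P$ because the level of $x_i$ equals $1$ plus the total number of positions skipped before $x_i$ along $P$ (Observation~\ref{obs-trans-construction}(c)), which is exactly the level recomputed from the positions. Hence the correspondence is one-to-one. I expect the independence of the assembled set $W$ in the first direction to be the main obstacle, since it is the only step where the global co-comparability structure, namely the transitivity of non-edges combined with the position/order dictionary of Property~\ref{prop-trans-orderingtopos}, must be invoked; by contrast the second direction and the inverse check are essentially position-and-level bookkeeping.
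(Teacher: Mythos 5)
Your proof is correct and follows essentially the same route as the paper's: the forward direction builds a maximum extension out of maximum extensions of the consecutive max-extendable pairs given by the arcs (your $M_i$ are the paper's $J_i$) and glues the pieces together with Property~\ref{prop-transitive}, while the backward direction is the same level/position bookkeeping as the paper's Claim~\ref{claim-trans-arcsexist}. The only organizational differences --- obtaining independence and max-extendability of the path's vertex set in a single stroke via the assembled set $W$ rather than in two separate claims, plus the explicit mutual-inverse check --- do not change the substance.
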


\begin{proof}
Let us first consider an $s$-$t$-path $P= s, x_1,x_2,\dots, x_{h}, t$ in $G'$.  We know from Property~\ref{prop-trans-pathlength} that this path consists of exactly $\alpha-d+3$ vertices, hence $h = \alpha-d+1$.  Let $v_i\in V$ be the vertex in $G$ corresponding to $x_i\in V'$, for $i\in \intInterval{\alpha-d+1}$. Notice that by Observation~\ref{obs-trans-construction}(a), we have $\pos(v_1)<\pos(v_2)<\ldots <\pos(v_{\alpha-d+1})$. 

\begin{claim1} \label{claim-trans-isindset}
$\set{v_1,\ldots,v_{\alpha-d+1}}$ is an independent set in $G$. 
\end{claim1}

\begin{claimproof}
By construction, if there is an arc $(x_i, x_{i+1})$ in $G'$, for $i\in \intInterval{\alpha-d}$, then $v_i$ and $v_{i+1}$ are necessarily non-adjacent. 
Since $\pos(v_i)<\pos(v_{i+1})$, we have by Property~\ref{prop-trans-orderingtopos} that $v_i \prec v_{i+1}$. 
Thus, we obtain that $v_i \prec v_{i+1} \prec \dots \prec v_{i+j}$. 
Now using Property~\ref{prop-transitive}, we conclude that $v_i, v_{i+j}$ are non-adjacent, for $i, i+j \in \intInterval{\alpha-d+1}$, $j > 1$, such that $x_i, x_{i+j} \in V(P)$.
 Thus, $\set{v_1,\ldots,v_{\alpha-d+1}}$ is an independent set in $G$. 
\end{claimproof}

\begin{claim1}
The independent set $\set{v_1, \dots, v_{\alpha-d+1}}$ is max-extendable in $G$. 
\end{claim1}

\begin{claimproof}
As mentioned before, by construction, we know that if there is an arc $(x_i, x_{i+1})$ in~$G'$, then $\set{v_i, v_{i+1}}$ is max-extendable in $G$.  Let now $J'_i \subseteq V$, for $i \in \intInterval{\alpha-d}$, be a maximum extension of $\set{v_i, v_{i+1}}$. Let further $J_i = \set{w \in J_i'| \pos(v_i) < \pos(w) < \pos(v_{i+1})} $ (see Figure~\ref{extensiontoindset}).  
We also adapt this definition to the first vertex $v_1$, and denote by $J_0'$ a maximum extension of $\set{v_1}$  (recall that $v_1\in \mathcal{I}$). 
We choose $J_0 = \set{w \in J_0' | \pos(w) <\pos(v_1)} $. 
Similarly, we denote by $J_{\alpha-d+1}'$ a maximum extension of $\set{v_{\alpha-d+1}}$ and $J_{\alpha-d+1} = \set{w \in J_{\alpha-d+1}'| \pos(v_{\alpha-d+1}) < \pos(w)}$ (recall that $v_{\alpha-d+1}\in \mathcal{I}$). 
The set $\bigcup_{h =0}^{\alpha-d+1} J_h \cup \set{v_1,\dots, v_{\alpha-d+1}}$ is an independent set, since $\set{v_i,v_{i+1}} \cup J_i$ is an independent set, for $i \in \intInterval{\alpha-d+1}$, and we can combine them using Property~\ref{prop-transitive}. 
From the fact that a maximum extension of $\set{v_i,v_{i+1}}$, with $i \in \set{1,\dots, \alpha-d}$, is a maximum independent set $I$, it follows that for any $p\in \intInterval{\alpha}$, there is a vertex in $u \in I$ with $\pos(u) = p$. Thus, we get that $|J_i| = \pos(v_{i+1})-\pos(v_i)-1$, for $i \in \intInterval{\alpha-d}$. Further, we have $ |J_0| = \pos(v_1)-1$ and $|J_{\alpha-d+1}| = \alpha-\pos(v_{\alpha-d+1})$.
Thus, 
\begin{align*}
&\Bigg\vert \set{v_1,\dots, v_{\alpha-d+1}}  \cup \bigcup_{h =0}^{\alpha-d+1} J_h\Bigg\vert \\
&= (\alpha-d+1) +( \pos(v_1)-1 )+ \sum_{h=1}^{\alpha-d} \big( \pos(v_{h+1}) - \pos(v_h)-1\big) + (\alpha-\pos(v_{\alpha-d+1})) \\
&= (\alpha-d+1) + \alpha-(\alpha-d) - 1 = \alpha.
\end{align*}
Hence, it follows that $\set{v_1,\dots, v_{\alpha-d+1}} \cup \bigcup_{h =0}^{ \alpha-d+1} J_h$ is a maximum independent set in $G$, which shows that $\set{v_1,\dots, v_{\alpha-d+1}}$ is max-extendable. 
\end{claimproof}

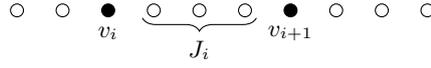
\begin{figure}[t]
\begin{center}
%
%

\centering
\begin{tikzpicture}
    
	\tikzstyle{interval} = [thick]
	\tikzstyle{intervald} = [thick, dotted]

	\node [evertex](v) at (-1,3){};
	\node [evertex](v) at (-0.4,3){};
	\node [vertex](v) at (0.2,3){};
	\node [evertex](v) at (0.8,3){};
	\node [evertex](v) at (1.4,3){};
	\node [evertex](v) at (2,3){};
	\node [vertex](v) at (2.6,3){};
	\node [evertex](v) at (3.2,3){};
	\node [evertex](v) at (3.8,3){};
	\node [evertex](v) at (4.4,3){};

	\node[font = \small, ] at (0.2,2.7){$v_i$};
	\node[font = \small, ] at (2.6,2.7){$v_{i+1}$};
	
	\draw [decorate,decoration={brace, mirror,amplitude=5pt},xshift=0pt,yshift=0pt] (0.65,2.9) -- (2.15,2.9) node [black,midway,yshift=-5pt, below] {\small $J_i$};

\end{tikzpicture}

\caption{\label{extensiontoindset} An extension of $v_i$ and $v_{i+1}$ to a maximum independent set. The empty circles represent all vertices of the maximum extension $J_i'$, while the set $J_i \subseteq J_i'$ contains exactly the vertices whose corresponding vertices have positions between those of $v_i$ and $v_{i+1}$. We assume the vertices to be ordered from left to right according to the ordering $\prec$.}
\end{center}
\end{figure}

Let us now prove the converse, i.e.\ that a max-extendable independent set of size $\alpha-d+1$ in $G$ corresponds to an $s$-$t$-path in $G'$. Consider a max-extendable independent set $I = \set{v_1,\dots, v_{\alpha-d+1}}$ of size $\alpha-d+1$ in~$G$. We may assume that $\pos(v_1) < \pos(v_2) < \dots <  \pos(v_{\alpha-d+1})$. 

Let $g_1,\dots, g_{d-1}$ be as in Property \ref{prop-trans-emptypositionsindset}. For $i \in \intInterval{\alpha-d+1}$, let $x_i\in V'$ be the vertex corresponding to $v_i$, with $x_i \in L_{\pos(v_i), \ell}$, where $ \ell= \vert\set{g_k \mid g_k < \pos(v_i), k\in\intInterval{d-1}} \vert + 1$. By Property~\ref{prop-trans-emptypositionsindset}, we know that $1\leq \ell \leq d$, and since $1\leq \pos(v_i) \leq \alpha$, we get that $x_i$ exists. To show the existence of a path $P$  with vertices $s,x_1,\dots, x_{\alpha-d+1},t $ in that order, it remains to show that the arcs $(s,x_1)$, $(x_i, x_{i+1})$, for $i \in\intInterval{\alpha-d}$, and $(x_{\alpha-d+1},t)$ exist.

\begin{claim1}
\label{claim-trans-arcsexist}
The arcs $(s,x_1)$, $(x_i, x_{i+1})$, for $i \in\intInterval{\alpha-d}$, and $(x_{\alpha-d+1},t)$ exist in $G'$.
\end{claim1}

\begin{claimproof}
For the arc $(s,x_1)$, notice that $\vert\set{g_k \mid g_k < \pos(v_1), k\in\intInterval{d-1}} \vert + 1 = \pos(v_1)$, and hence the arc exists by definition. Let $i \in \intInterval{\alpha-d}$.  Let $\ell_i = \vert\set{g_k \mid g_k < \pos(v_i), k\in \intInterval{d-1}} \vert + 1$ be the level of $x_i$ and let $\ell_{i+1} =\vert\set{g_k \mid g_k < \pos(v_{i+1}), k\in \intInterval{d-1}} \vert + 1$ be the level of $x_{i+1}$, as defined above. Recall that by definition the arc $(x_i,x_{i+1})$ exists, if $\pos(x_{i+1}) = \pos(x_i)+g+1$ and $\ell_{i+1} = \ell_i+g$, for some $g\geq 0$, and $\set{v_i, v_{i+1}}$ is max-extendable. It follows from the above that $\ell_{i+1} - \ell_i=\vert\set{g_k \mid \pos(v_i)\leq g_k < \pos(v_{i+1}), k\in \intInterval{d-1}} \vert$, that is, the number of positions between $\pos(v_i)$ and $\pos(v_{i+1})$ that are not used by any vertex in $I$, and hence we get $\pos(x_{i+1}) = \pos(x_i)+\ell_{i+1} -\ell_i+1$. Furthermore, $\set{v_i, v_{i+1}}$ is clearly max-extendable since both belong to $I$. We conclude that the arc $(x_i, x_{i+1})$ necessarily exists. If we consider $x_{\alpha-d+1}$, we get that $\level(x_{\alpha-d+1}) = \vert\set{g_k \mid g_k < \pos(v_{\alpha-d+1}), k\in [d-1]} \vert + 1 = d-1 - (\alpha-\pos(v_{\alpha-d+1}))+1 = \pos(v_{\alpha-d+1}) + d - \alpha$. Hence, the arc $(x_{\alpha-d+1}, t)$ exists by definition. 
\end{claimproof}

It follows that $P = s,x_1,\dots, x_{\alpha-d+1},t $ is a path in~$G'$. This concludes the proof of our lemma.
\end{proof}

Let us show two more properties that we will need in our main theorem of the section. 

\begin{property}
\label{obs-trans-extsets}
Let $G$ be a co-comparability graph with vertex ordering $\prec$ and let $G'$ be the corresponding directed graph, constructed as described above. Let $I$ be a max-extendable independent set of $G$ and let $P$ be the corresponding $s$-$t$-path in $G'$. Consider $v \in I$ and its corresponding vertex $x\in V(P)$. Then, $\pos_{I}(v) = \pos(v) - \level(x)+1$.
\end{property}

\begin{proof}
Let $g_1,\dots, g_{d-1}$ be as in Property \ref{prop-trans-emptypositionsindset}. Recall from the proof of Lemma \ref{indsetsarepaths} that $ \level(x)= \vert\set{g_k \mid g_k < \pos(v_i), k\in\intInterval{d-1}} \vert + 1$.  Hence, the result follows.
\end{proof}

\begin{property}
\label{prop-trans-levelpos}
Let $G$ be a co-comparability graph with vertex ordering $\prec$ and let $G'$ be the corresponding directed graph, constructed as described above. Let $I_1,I_2$ be two max-extendable independent sets of size $\alpha-d+1$ in $G$, and let $P_1,P_2$ be their corresponding paths in $G'$.
Let $v \in I_1 \cap I_2$ such that the corresponding vertices $x_1\in P_1,  x_2\in P_2$ are different.  Assume without loss of generality that $\level(x_1) < \level(x_2)$. Then,  $\pos_{I_1}(v)> \pos_{I_2}(v)$.
\end{property}

\begin{proof}
Let $g_1^1,\dots, g_{d-1}^1 \in \intInterval{\alpha}$ be the integers, such that $\pos(y) \neq g^1_1,\dots, g^1_{d-1}$ for any $y \in P_1$, which exist by Property~\ref{prop-trans-emptypositionsindset}. Let $g_1^2, \dots, g_{d-1}^2$ be this set of integers corresponding to $P_2$.
Recall from the proof of Lemma \ref{indsetsarepaths} that $\level(x_1) = \big\vert \set{g_k^1 \mid g_k^1 < \pos(v)|k \in \intInterval{d-1}} \big\vert+1$ and $\level(x_2) = \big\vert\set{g_k^2 \mid g_k^2 < \pos(v)|k \in \intInterval{d-1}}\big\vert+1$.
Since, by Property~\ref{obs-trans-extsets} $\pos_{I_1}(v) = \pos(v) - \level(x_1)+1$ and $\pos_{I_2}(v) = \pos(v) - \level(x_2)+1$, and since we assume that $\level(x_1) < \level(x_2)$, it follows that  $\pos_{I_1}(v) > \pos_{I_2}(v)$. 
\end{proof}

\begin{theorem}
\label{theo-transalpha}
\trans$(\alpha)$ is polynomial-time solvable for co-comparability graphs.
\end{theorem}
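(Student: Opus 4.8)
The plan is to prove the stated equivalence between the constructed \mincut\ instance $(G',k)$ and the \trans$(\alpha)$ instance $(G,d,k)$, that is, to show that the minimum size of an $s$-$t$-cut in $G'$ equals the minimum size of a $d$-transversal in $G$. First I would record that $G'$ has at most $d\cdot|\mathcal I|+2\le d|V|+2$ vertices and can be built in polynomial time: the sets $L_p$, and hence their copies $L_{p,\ell}$, are obtained from Lemma~\ref{lem-trans-partition-poly}, and for each candidate arc one only has to test max-extendability of a pair, which is decidable in polynomial time using the functions $\leftext(\cdot)$ and $\rightext(\cdot)$. Since \mincut\ is polynomial-time solvable~\cite{mincut}, the theorem follows once the equivalence is established.

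For the direction from cuts to transversals, suppose $C\subseteq V'\setminus\set{s,t}$ is an $s$-$t$-cut with $|C|\le k$. I would project $C$ onto $G$ by letting $S=\set{v\in\mathcal I\mid \textrm{some copy of }v\textrm{ lies in }C}$, so that $|S|\le|C|\le k$ because the (up to) $d$ copies of a vertex all project to it. By Lemma~\ref{indsetsarepaths} every max-extendable independent set $I$ of size $\alpha-d+1$ is an $s$-$t$-path $P_I$ in $G'$; as $C$ is a cut, $P_I$ meets $C$ in some vertex, which is a copy of some $v\in I$, whence $v\in I\cap S$. By Observation~\ref{obs-trans-intersection}, $S$ is then a $d$-transversal, giving that the minimum transversal is at most the minimum cut.

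The substantial direction is from transversals to cuts: given a $d$-transversal $S$ with $|S|\le k$, I must produce an $s$-$t$-cut $C$ with $|C|\le|S|$. The difficulty is exactly the copy-multiplicity, namely that taking \emph{all} copies of the vertices of $S$ blocks every path but may cost up to $d|S|$. The plan is therefore to select, for each $v\in S$, a single level $\ell_v$ and set $C=\set{(v,\ell_v)\mid v\in S}$, and to show that these chosen copies still meet every $s$-$t$-path, equivalently every max-extendable independent set of size $\alpha-d+1$. This is where Properties~\ref{obs-trans-extsets} and~\ref{prop-trans-levelpos} enter: since $\pos_I(v)=\pos(v)-\level(x)+1$ is governed by $\level(x)$, and since for a fixed $v$ a larger level forces a strictly smaller position inside the set, the levels at which $v$ can occur along different paths are linearly ordered in a controlled way, which is what should make a consistent one-copy-per-vertex choice possible.

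I expect this last step to be the main obstacle. Concretely, it amounts to showing that the copy-multiplicity never inflates the minimum cut, equivalently (via a Menger-type argument) that a family of vertex-disjoint $s$-$t$-paths cannot all be hit by fewer transversal vertices than there are paths; the danger is a single $v\in S$ occurring at several levels along several disjoint paths. Property~\ref{prop-trans-levelpos} is precisely the tool that should rule this out, because it ties each additional level used by $v$ to a distinct intra-set position, forcing such disjoint paths to rely on distinct vertices of $S$ (a Hall-type, or equivalently rerouting, argument). Once both inequalities are in hand we obtain that the minimum cut equals the minimum transversal, so $(G',k)$ is a \yes-instance of \mincut\ if and only if $(G,d,k)$ is a \yes-instance of \trans$(\alpha)$; as $G'$ is of polynomial size and constructible in polynomial time and \mincut\ is polynomial-time solvable, \trans$(\alpha)$ is polynomial-time solvable for co-comparability graphs.
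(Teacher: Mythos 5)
Your construction of $G'$, the polynomial-time bound, and the cut-to-transversal direction all match the paper, and your diagnosis of where the difficulty lies (one copy per vertex of $S$ must suffice) is exactly right. But the transversal-to-cut direction, which you yourself flag as ``the main obstacle'', is left as a plan rather than a proof, and that plan is precisely the heart of the paper's argument. You do not specify \emph{which} copy $\ell_v$ to choose for each $v\in S$, and you do not prove that any choice rule yields a cut; you only assert that Property~\ref{prop-trans-levelpos} should make a Hall-type or rerouting argument work. The paper has to do real work here: it processes the vertices of $S$ in increasing order of $\prec$ and, for each $u\in S$, greedily adds the \emph{lowest-level} copy of $u$ that still lies on some $s$-$t$-path in $G'$ minus the cut built so far (Algorithm~\ref{AlgoCutconstruction}, with minimality of $S$ guaranteeing such a copy exists, so $|C|=|S|$). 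To show the resulting $C$ is a cut, it then runs an iterative contradiction: a surviving path $P_1$ yields a rightmost vertex $v\in S\cap I_1$ whose copy on $P_1$ was not taken, so a strictly lower-level copy of $v$ was added to $C$ because of another path $P_2$; Property~\ref{prop-trans-levelpos} gives $\pos_{I_1}(v)<\pos_{I_2}(v)$, so splicing the prefix of $I_2$ before $v$ with the suffix of $I_1$ after $v$ (made rigorous via maximum extensions and Property~\ref{prop-transitive}) produces a max-extendable independent set of size greater than $\alpha-d$ avoiding the part of $S$ to the right of $v$; either this contradicts $S$ being a $d$-transversal, or it forces a new vertex $w\in S$ strictly to the left of $v$, and the argument repeats; finiteness of $S$ then yields the contradiction.

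Note also that your proposed Menger/Hall reformulation does not sidestep this work: to verify Hall's condition, or to bound the number of internally vertex-disjoint $s$-$t$-paths by $|S|$, you would need exactly the same splicing construction to show that a single $v\in S$ occurring at several levels on several disjoint paths forces additional, distinct elements of $S$ further to the left. So the tool you name is the right one, but the argument that deploys it --- the greedy level selection and the iterated prefix/suffix exchange --- is missing, and without it the equivalence between $(G,d,k)$ and $(G',k)$ is not established.
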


\begin{proof}
Let $G=(V,E)$ be a co-comparability graph and let $(G,d,k)$ be an instance of \trans($\alpha$). We construct the graph $G'=(V',A')$ as described above. We will show that $(G,d,k)$ is a \yes-instance of \trans($\alpha$) if and only if $(G',k)$ is a \yes-instance of \mincut. Let $(G',k)$ be a \yes-instance of \mincut \ and let $C$ be an $s$-$t$-cut of~$G'$ of size at most~$k$. We want to prove that $(G,d,k)$ is a \yes-instance of \trans($\alpha$).

For every vertex in the cut $C\subseteq V'$, we add the corresponding vertex in $G$ to a set $S$. 
We assume for a contradiction that there is an independent set $I$ in $G- S$ of size $\alpha-d+1$ which is max-extendable in $G$.  By Lemma \ref{indsetsarepaths}, we know that there is a path $P$ from $s$ to $t$ in~$G'$ representing $I$.  Since $I \subseteq V\setminus S$, we get that $P\cap C = \varnothing$ and hence, we can find an $s$-$t$-path in $G'- C$, a contradiction. Thus, such an independent set $I$ does not exist, and so by Observation \ref{obs-trans-intersection}, we deduce that $(G,d,k)$ is a \yes-instance of \trans($\alpha$).

Let now $(G,d,k)$ be a \yes-instance of \trans($\alpha$). We want to show that $(G',k)$ is a \yes-instance of \mincut. Let $S\subseteq V$, with $|S|\leq k$, be a $d$-transversal of $G$. We may assume that $S$ is minimal.

We iteratively construct a set $C$ using Algorithm~\ref{AlgoCutconstruction}, and we will prove that $C$ is an $s$-$t$-cut in $G'$ with $|S| = |C|$.  For each vertex in $S$, the algorithm chooses the corresponding vertex in $G'$ belonging to the lowest level such that there is an $s$-$t$-path in $G'- C$ containing this vertex, and then adds it to $C$.  We will find such a vertex for every vertex in $S$, since otherwise $S$ would not be minimal. Hence, it is clear that $|S| = |C|$.

\begin{algorithm}[ht]
\caption{}
\label{AlgoCutconstruction}
\begin{algorithmic}[2]
\Require The graph $G'$ constructed from a co-comparability graph $G$,\\ a minimal $d$-transversal $S$ in $G$.
\Ensure An $s$-$t$-cut $C \subseteq V'$ with $|S| = |C|$.
\State Let $S = \set{u_1,\dots, u_{|S|}}, u_i \prec u_j$ for $i < j$, $i,j \in\intInterval{|S|}$.
\State Let $C = \varnothing$.
\For {$i $ from $1\to |S|$}
	\State Let $u = u_i$.
	\State Let $y_1,\dots, y_{d}\in V'$ be the vertices corresponding to $u$, sorted by increasing level.
	\For{$j$ from $1 \to d$}
		\If {$\exists$ $s$-$t$-path in $G'- C$ containing $y_j$}
			\State $C = C \cup y_j$
			 \State \textbf{break}
		\Else
			\State \textbf{continue}
		\EndIf
	\EndFor
\EndFor
\end{algorithmic}
\end{algorithm}

To prove that $C$, which is constructed by applying Algorithm~\ref{AlgoCutconstruction}, is indeed an $s$-$t$-cut in~$G'$, we assume for a contradiction that there exists an $s$-$t$-path $P_1$ in $G'- C$.  Let $I_1\subseteq V$ be the max-extendable independent set in $G$ of size $\alpha-d+1$ corresponding to $P_1$ ($I_1$ exists by Lemma~\ref{indsetsarepaths}).
Let $I_1^e\subseteq V$ be a maximum extension of $I_1$ in $G$. Since $S$ is a $d$-transversal of $G$, we know that $S\cap I_1 \neq \varnothing$. Let $v \in S\cap I_1$, and if $|S\cap I_1| > 1$, we choose the rightmost vertex according to $\prec$ in $S\cap I_1$ for $v$. Let $y_j\in V'$, for $j\in \intInterval{d}$, be the copy of $v$ in $P_1$. Since $P_1$ is a path in $G'- C$, we know that $y_j \not\in C$. Hence, there is some other copy of $v$ in $G'$, say $y_i$, $i \in \intInterval{d}$,  such that Algorithm~\ref{AlgoCutconstruction} added $y_i$ to $C$. Due to the procedure we use in Algorithm~\ref{AlgoCutconstruction} to choose the vertices in $C$, we have $i < j$.

Since $y_i$ was added to $C$, there exists a path $P_2$ in $G'$ containing $y_i$. Let $I_2$ be the max-extendable independent set of size $\alpha-d+1$ in $G$ corresponding to $P_2$ ($I_2$ exists by Lemma \ref{indsetsarepaths}), and let $I_2^e$ be a maximum extension of $I_2$ in $G$. Let $J_1 \subseteq I_1 $ be the set of the $\alpha-d+1-\pos_{I_1}(v)$ rightmost vertices in $I_1$, i.e.\ those vertices $u$ in $I_1$ such that $\pos(v)< \pos(u)$ (see Figure~\ref{compositionIndset}a)). We know that $J_1 \cap S = \varnothing$ by the choice of $v$. Let $J_1^e \supseteq J_1 $ be the set of the $\alpha-\pos(v)$ rightmost vertices in $I_1^e$, i.e.\ those vertices $u$ in $I_1^e$ such that $v\prec u$ (see Figure~\ref{compositionIndset}a)). Similarly, let $J_2\subseteq I_2$ be the set of the first $\pos_{I_2}(v)-1$ vertices in $I_2$, i.e.\ those vertices $u$ in $I_2$ such that $\pos(u)< \pos(v)$ (see Figure~\ref{compositionIndset}b)), and let $J_2^e \supseteq J_2$ be the set of the first $\pos(v)-1$ vertices in $I_2^e$, i.e.\ those vertices $u$ in $I_2^e$ such that $\pos(u)<\pos(v)$ (see Figure~\ref{compositionIndset}b)). It then follows from Property~\ref{prop-transitive} that $J_2^e \cup \set{v} \cup J_1^e$ is an independent set in $G$ of size $(\alpha-\pos(v))+1+(\pos(v)-1)=\alpha$, i.e.\ a maximum independent set in $G$. Thus, $J_2 \cup J_1$ is a max-extendable independent set in $G$. It follows from Property~\ref{prop-trans-levelpos}, that $\pos_{I_1}(v) < \pos_{I_2}(v)$. We conclude that $|J_2\cup J_1| =(\pos_{I_2}(v)-1)+(\alpha-d+1-\pos_{I_1}(v)) > \alpha-d$. Hence, $J_2 \cup J_1$ either is a max-extendable independent set in $G-S$ of size at least $\alpha-d+1$, or $J_2 \cap S \neq \varnothing$.  In the first case, we directly get a contradiction to our assumption that $S$ is a $d$-transversal in $G$.  So, we may assume that $J_2 \cap S \neq \varnothing$.

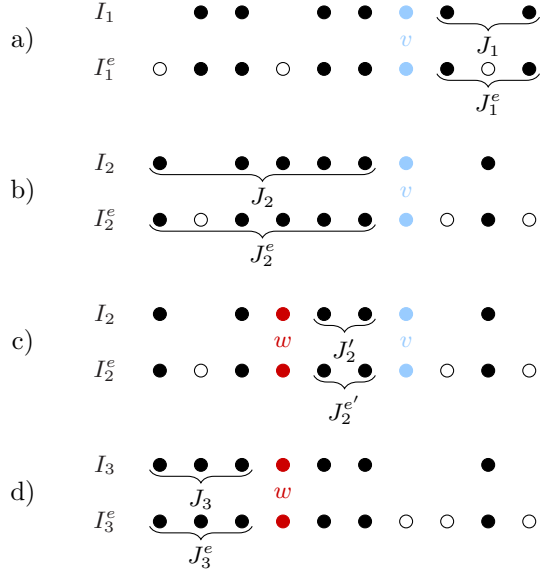
\begin{figure}[ht]
\centering
\begin{tikzpicture}
   
	\tikzstyle{interval} = [thick]
	\tikzstyle{intervald} = [thick, dotted]
	\tikzstyle{intervalv} = [thick, RoyalBlue]
	\tikzstyle{intervalu} = [thick, ForestGreen]
	\tikzstyle{intervalr} = [thick, Red]
	\tikzstyle{interval1d} = [thick, ForestGreen, dotted]
	\tikzstyle{indsetJ} = [thick, Red]
	\def\ha{3}
	\def\hb{2.25}
	\def\hc{1.5}
	
	\begin{scope}[ xscale = 0.9]
	\node[vertex](a) at (-0.4,\ha){};	
	\node[vertex](a) at (0.2,\ha){};	
	\node[vertex](a) at (1.4,\ha){};	
	\node[vertex](a) at (2,\ha){};	
	\node[bvertex](a) at (2.6,\ha){};	
	\node[vertex](a) at (3.2,\ha){};	
	\node[vertex](a) at (4.4,\ha){};		
	
	\node[evertex](a) at (-1,\hb){};
	\node[vertex](a) at (-0.4,\hb){};	
	\node[vertex](a) at (0.2,\hb){};	
	\node[evertex](a) at (0.8,\hb){};	
	\node[vertex](a) at (1.4,\hb){};	
	\node[vertex](a) at (2,\hb){};	
	\node[bvertex](a) at (2.6,\hb){};	
	\node[vertex](a) at (3.2,\hb){};	
	\node[evertex](a) at (3.8,\hb){};	
	\node[vertex](a) at (4.4,\hb){};

	\node[font = \small, lightblue] at (2.6,2.625){$v$};
	
	\node[font = \small, Black] at (-1.8,\ha){$I_1$};
	\node[font = \small, Black] at (-1.8,\hb){$I^e_1$};

	\draw [decorate,decoration={brace,mirror,amplitude=5pt},xshift=0pt,yshift=11pt] (3.05,2.55) -- (4.55,2.55) node [black,midway,yshift=-4pt, below] {\small $J_1$};
		\draw [decorate,decoration={brace,mirror,amplitude=5pt},xshift=0pt,yshift=11pt] (3.05,1.8) -- (4.55,1.8) node [black,midway,yshift=-4pt, below] {\small $J^e_1$};

	\node[] at (-3,2.625){a)};
	\end{scope}
	
	\begin{scope}[shift = {(0,-2)}, xscale = 0.9]
		\node[vertex](a) at (-1,\ha){};
	\node[vertex](a) at (0.2,\ha){};	
	\node[vertex](a) at (0.8,\ha){};	
	\node[vertex](a) at (1.4,\ha){};	
	\node[vertex](a) at (2,\ha){};	
	\node[bvertex](a) at (2.6,\ha){};	
	\node[vertex](a) at (3.8,\ha){};

		\node[vertex](a) at (-1,\hb){};
	\node[evertex](a) at (-0.4,\hb){};	
	\node[vertex](a) at (0.2,\hb){};	
	\node[vertex](a) at (0.8,\hb){};	
	\node[vertex](a) at (1.4,\hb){};	
	\node[vertex](a) at (2,\hb){};	
	\node[bvertex](a) at (2.6,\hb){};	
	\node[evertex](a) at (3.2,\hb){};	
	\node[vertex](a) at (3.8,\hb){};	
	\node[evertex](a) at (4.4,\hb){};	
	
	\node[font = \small, lightblue] at (2.6,2.625){$v$};
		\node[font = \small, Black] at (-1.8,\ha){$I_2$};
		\node[font = \small, Black] at (-1.8,\hb){$I_2^e$};
	
		\draw [decorate,decoration={brace, mirror,amplitude=5pt},xshift=0pt,yshift=11pt] (-1.15,1.8) -- (2.15,1.8) node [black,midway,yshift=-4pt, below] {\small $J_2^e$};
	\draw [decorate,decoration={brace, mirror,amplitude=5pt},xshift=0pt,yshift=11pt] (-1.15,2.55) -- (2.15,2.55) node [black,midway,yshift=-4pt, below] {\small $J_2$};
	\node[] at (-3,2.625){b)};
	\end{scope}
	
		\begin{scope}[shift = {(0,-4)}, xscale = 0.9]
		\node[vertex](a) at (-1,\ha){};
	\node[vertex](a) at (0.2,\ha){};	
	\node[rvertex](a) at (0.8,\ha){};	
	\node[vertex](a) at (1.4,\ha){};	
	\node[vertex](a) at (2,\ha){};	
	\node[bvertex](a) at (2.6,\ha){};	
	\node[vertex](a) at (3.8,\ha){};

		\node[vertex](a) at (-1,\hb){};
	\node[evertex](a) at (-0.4,\hb){};	
	\node[vertex](a) at (0.2,\hb){};	
	\node[rvertex](a) at (0.8,\hb){};	
	\node[vertex](a) at (1.4,\hb){};	
	\node[vertex](a) at (2,\hb){};	
	\node[bvertex](a) at (2.6,\hb){};	
	\node[evertex](a) at (3.2,\hb){};	
	\node[vertex](a) at (3.8,\hb){};	
	\node[evertex](a) at (4.4,\hb){};	
	
	\node[font = \small, lightblue] at (2.6,2.625){$v$};
	\node[font = \small, nicered] at (0.8,2.625){$w$};
		\node[font = \small, Black] at (-1.8,\ha){$I_2$};
		\node[font = \small, Black] at (-1.8,\hb){$I_2^e$};
	
		\draw [decorate,decoration={brace, mirror,amplitude=5pt},xshift=0pt,yshift=11pt] (1.25,1.8) -- (2.15,1.8) node [black,midway,yshift=-4pt, below] {\small $J_2^{e'}$};
	\draw [decorate,decoration={brace, mirror,amplitude=5pt},xshift=0pt,yshift=11pt] (1.25,2.55) -- (2.15,2.55) node [black,midway,yshift=-4pt, below] {\small $J'_2$};
	\node[] at (-3,2.625){c)};
	\end{scope}
	
	\begin{scope}[shift = {(0,-6)}, xscale = 0.9]

		\node[vertex](a) at (-1,\ha){};
	\node[vertex](a) at (-0.4,\ha){};	
	\node[vertex](a) at (0.2,\ha){};	
	\node[rvertex](a) at (0.8,\ha){};	
	\node[vertex](a) at (1.4,\ha){};	
	\node[vertex](a) at (2,\ha){};	
	\node[vertex](a) at (3.8,\ha){};

	\node[vertex](a) at (-1,\hb){};
	\node[vertex](a) at (-0.4,\hb){};	
	\node[vertex](a) at (0.2,\hb){};	
	\node[rvertex](a) at (0.8,\hb){};	
	\node[vertex](a) at (1.4,\hb){};	
	\node[vertex](a) at (2,\hb){};	
	\node[evertex](a) at (2.6,\hb){};	
	\node[evertex](a) at (3.2,\hb){};	
	\node[vertex](a) at (3.8,\hb){};	
	\node[evertex](a) at (4.4,\hb){};

	\node[font = \small, nicered] at (0.8,2.625){$w$};

	\node[font = \small, Black] at (-1.8,\ha){$I_3$};
	\node[font = \small, Black] at (-1.8,\hb){$I_3^e$};
	
	\draw [decorate,decoration={brace, mirror,amplitude=5pt},xshift=0pt,yshift=11pt] (-1.15,2.55) -- (0.35,2.55) node [black,midway,yshift=-4pt, below] {\small $J_3$};
		\draw [decorate,decoration={brace, mirror,amplitude=5pt},xshift=0pt,yshift=11pt] (-1.15,1.8) -- (0.35,1.8) node [black,midway,yshift=-4pt, below] {\small $J_3^e$};

	\node[] at (-3,2.625){d)};

	\end{scope}

\end{tikzpicture}
%
%
%
%
%
%
%
%
\caption{\label{compositionIndset}The different independent sets, considered in the proof. Note that the empty vertices are only part of the corresponding extended version of the independent set (e.g. in $I_1^e$ but not in $I_1$), while the other vertices are contained in both sets.}
\end{figure}

Let $w\in J_2 \cap S$, and if $|J_2 \cap S|>1$, we take the rightmost vertex $w$ in $J_2\cap S$ with respect to~$\prec$ such that $\pos(w)<\pos(v)$. 
Let $y_h$, $h\in \intInterval{d}$, be the vertex in $P_2$ corresponding to $w$. Then, $y_h \not\in C$, since otherwise Algorithm~\ref{AlgoCutconstruction} would not have added $y_i$ to $C$. Thus, as before, there exists some vertex $y_g$, $g\in \intInterval{d}$, with $g<h$, such that $y_g$ corresponds to $w$ and $y_g\in C$. 
Therefore, there exists a path $P_3$ in $G'$ containing $y_g$. Let $I_3$ be the max-extendable independent set of size $\alpha-d+1$ in $G$ corresponding to $P_3$, and let $I_3^e$ be its extension (see Figure~\ref{compositionIndset}(d)). We define $J_3 = \set{u \in I_3|\pos_{I_3}(u) < \pos_{I_3}(w)}$, as well as $J_3^e=\set{u \in I_3|\pos(u) < \pos(w)}\supseteq J_3$. 
Furthermore, we consider the set $J_2' = \set{u \in I_2|\pos_{I_2}(w) < \pos_{I_2}(u) < \pos_{I_2}(v)}$ as well as {$J_2^{e'} = \set{u \in I_2|\pos(w) < \pos(u) < \pos(v)}\supseteq J_2'$ (see Figure~\ref{compositionIndset}(d)). }
Thus, $J_3\cup J_2'\cup J_1$ is a subset of {$J_3^e\cup \{w\}\cup J_2^{e'}\cup \{v\}\cup J_1^e$}, which by Property~\ref{prop-transitive} is an independent set in $G$ of size $(\pos(w)-1)+1+(\pos(v)-\pos(w)-1)+1+(\alpha-\pos(v))=\alpha$, i.e.\ a maximum independent set. 
Thus, $J_3\cup J_2'\cup J_1$ is a max-extendable independent set in $G$. It follows from Property~\ref{prop-trans-levelpos}, that $\pos_{I_2}(w) < \pos_{I_3}(w)$. 
Since in addition $\pos_{I_1}(v) < \pos_{I_2}(v)$ (see above), we conclude that $|J_3 \cup J_2' \cup J_1|=(\pos_{I_3}(w)-1)+(\pos_{I_2}(v)-\pos_{I_2}(w)-1)+(\alpha-d+1-\pos_{I_1}(v))>\alpha-d$. 
Hence, we obtain again that either $J_3 \cup J_2' \cup J_1$ is a max-extendable independent set in $G-S$ of size at least $\alpha-d+1$, or that $J_3\cap S\neq \varnothing$.
As before, the first case gives us a contradiction to our assumption that $S$ is a $d$-transversal in $G$. So, we may assume that $J_3\cap S\neq \varnothing$.

By repeatedly using these arguments, we can always find a new vertex in $S$. But since $S$ is finite, this case can not always occur. Hence, we will necessarily get a contradiction and thus, there is no $s$-$t$-path $P$ in $G'- C$. So we conclude that $C$ is an $s$-$t$-cut in $G'$.

Let us now consider the complexity of our algorithm. From \cite{tarjanmaxflow}, we know that for a graph with $n$ vertices, we can solve \mincut \ in $\mathcal{O}(n^3)$. Since the graph $G'$ has $\mathcal{O}(dn)$ vertices, computing a \mincut \ in $G'$ can be done in time $\mathcal{O}(d^3n^3)$. We still need to consider the time we need to construct $G'$. Using Lemma~\ref{lem-trans-partition-poly}, we know that we can find the partition of $\mathcal{I}$ into sets $L_{p}$, for $p\in \intInterval{d}$, in time $\mathcal{O}(|V|^2)$. For every pair of vertices in $G'$, we can check in $\mathcal{O}(|V|)$ if we introduce an arc between them in $G'$. Hence, $G'$ can be constructed in $\mathcal{O}(d^2|V|^3)$. We conclude that \trans($\alpha$) can be solved in time $\mathcal{O}(d^3|V|^3)$.
\end{proof}


\section{More Properties of Independent Sets in Co-Comparability Graphs}
\label{sec-structure-nonmax}
To solve \del$(\alpha)$,  we will use an approach similar to the one in Section~\ref{sec:trans}. 
This requires an extension of the structural results from Section~\ref{sec:indepsettrans} for independent sets that are not necessarily maximum. We sometimes omit proofs since they are very similar to the ones we presented in the previous sections.

Recall that Lemma~\ref{lemma-fixedposition} allowed us to partition the vertices in a maximum independent set by using their position. The same property holds for all vertices with respect to the largest independent set in which they are contained.
\begin{lemma}
\label{lem-fixedposition-nonmax}
Let $G = (V,E)$ be a co-comparability graph with a vertex ordering $\prec$. Let $v \in V$ and let $I_1,I_2$ be two independent sets of $G$ such that both contain $v$ and they are maximum among the independent sets of $G$ containing $v$. Let $\beta = |I_1| = |I_2|$.
Let $I_1 = \set{u_1, \dots, u_{i-1}, v = u_i, u_{i+1}, \dots, u_\beta}$, with $u_1 \prec \dots \prec u_\beta$, and $I_2= \set{v_1, \dots, v_{j-1}, v = v_j, v_{j+1}, \dots, v_\beta}$, with $v_1 \prec \dots \prec v_\beta$. Then $i = j$.
\end{lemma}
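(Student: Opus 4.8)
The plan is to mirror the proof of Lemma~\ref{lemma-fixedposition} almost verbatim, since the only structural fact that was used there was Property~\ref{prop-transitive} together with the fact that the two sets had equal, maximum, cardinality. Here the two sets $I_1$ and $I_2$ need not be globally maximum, but they \emph{are} of equal size $\beta$ and each is maximum \emph{among} the independent sets containing the common vertex $v$, which is exactly the leverage I need to derive a contradiction from a mismatch of positions.

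First I would assume for a contradiction that $i \neq j$; without loss of generality $i < j$. I would then split each set at the common vertex $v = u_i = v_j$. Consider the ``left part'' of $I_2$, namely $\set{v_1, \dots, v_{j-1}}$, and the ``right part'' of $I_1$ together with $v$, namely $\set{u_i, u_{i+1}, \dots, u_\beta}$. The key step is the same non-adjacency argument as before: there is no edge between $v$ and any $v_k$ with $k < j$ (they lie in the independent set $I_2$), and no edge between $v$ and any $u_m$ with $m > i$ (they lie in $I_1$); since every $v_k \prec v = u_i \prec u_m$, Property~\ref{prop-transitive} forces that there is no edge $v_k u_m$ either. Hence $\set{v_1, \dots, v_{j-1}, v=u_i, u_{i+1}, \dots, u_\beta}$ is an independent set.

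Next I would count its size and locate the contradiction. It contains $v$ once, the $j-1$ vertices $v_1, \dots, v_{j-1}$, and the $\beta - i$ vertices $u_{i+1}, \dots, u_\beta$, and these three groups are pairwise disjoint (the $v_k$ precede $v$, the $u_m$ follow it). So its cardinality is $(j-1) + 1 + (\beta - i) = \beta + (j - i) \geq \beta + 1$, since $j > i$. This is an independent set containing $v$ that is strictly larger than $\beta$, contradicting the hypothesis that $I_1$ (equivalently $I_2$) is maximum among independent sets containing $v$. Therefore $i = j$.

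The one point needing care is to notice that the contradiction is with maximality \emph{relative to $v$}, not with the global independence number $\alpha(G)$ as in Lemma~\ref{lemma-fixedposition}; this is precisely why the constructed set must be guaranteed to contain $v$, which it does by construction. I expect no genuine obstacle here, as the argument is a direct adaptation; the only thing to verify carefully is that all the vertices used genuinely lie to the correct side of $v$ in the ordering $\prec$, which follows from $u_1 \prec \dots \prec u_\beta$ and $v_1 \prec \dots \prec v_\beta$, so that the three groups are disjoint and the size count is exact.
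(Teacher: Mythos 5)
Your proof is correct and follows essentially the same route as the paper: the paper's own proof also constructs the independent set $\set{v_1, \dots, v_{j-1}, v, u_{i+1}, \dots, u_\beta}$ and derives a contradiction with the maximality of $\beta$ among independent sets containing $v$. You simply spell out the non-adjacency argument (via Property~\ref{prop-transitive}) and the cardinality count that the paper leaves implicit by referring back to Lemma~\ref{lemma-fixedposition}.
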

\begin{proof}
Suppose that $i < j$, then similar to the proof of Lemma~\ref{lemma-fixedposition}, we can find an independent set $\set{v_1, \dots, v_{j-1}, v, u_{i+1}, \dots, u_\beta}$ which has size $\beta +1$ and contains $v$, a contradiction.
\end{proof}

\begin{definition}
\label{def-partition}
Let $G=(V,E)$ be a co-comparability graph with $\alpha = \alpha(G)$.  Let $L_1,\dots, L_\alpha$ be as in Lemma~\ref{lem-trans-partition-poly}. 
Let $\I_{\beta}$ be the set of vertices that occur in an independent set of size $\beta$, but not of size $\beta+1$, $\beta \in \intInterval{\alpha}$. 
We define $L_{p,\beta}, p \in\intInterval{\beta}$, as the set of vertices $v \in \I_{\beta}$ such that for an independent set $I$ of size $\beta$ containing $v \in V$ we have that $\pos_I(v) = p$.
From now on, we refer to the sets $L_p$ from Lemma~\ref{lem-trans-partition-poly} as $L_{p,\alpha}$. We say that $\pos(v) = p$.
\end{definition}

Note that we defined the position of a vertex in two different ways. We will consider in the following both the relative position $\pos_I$ of a vertex, which depends on the independent set $I$ and the absolute position $\pos$ of a vertex, which is independent of any specific independent set. Figure \ref{f-blocker-example} gives an example of a co-comparability graph. We can see the assignment of the vertices in $\I_{\alpha}$ and $\I_{\alpha-1}$ to the sets $L_{p,\beta}$. The set $L_{3,\alpha-1}$ is empty.

\begin{observation}
\label{obs-blocker-size-extensions}
Let $G=(V,E)$ be a co-comparability graph and let $v \in L_{p,\beta}, \beta \in \intInterval{\alpha}, p \in\intInterval{\beta}$. Consider a maximum left extension $I_\ell^v$ of $v$ and a maximum right extension $I_r^v$ of $v$. Then we have that $|I_\ell^v| = p-1$ and $|I_r^v| = \beta - |I_\ell^v| - 1 = \beta - (p-1) - 1 = \beta -p$.
\end{observation}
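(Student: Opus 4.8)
The plan is to exploit the decomposition of any independent set containing $v$ into a left extension, the vertex $v$ itself, and a right extension, exactly as was observed at the start of the proof of Lemma~\ref{lem-trans-partition-poly}. The two ingredients I would isolate first are: (i) if $I_\ell^v$ is any left extension of $v$ and $I_r^v$ is any right extension of $v$, then $I_\ell^v\cup\set{v}\cup I_r^v$ is an independent set containing $v$; and (ii) conversely, any independent set containing $v$ splits at $v$ into a left extension (its vertices preceding $v$ in $\prec$) and a right extension (its vertices following $v$). Ingredient (i) follows from Property~\ref{prop-transitive}: every $a\in I_\ell^v$ satisfies $a\prec v$ with $av\notin E$, and every $b\in I_r^v$ satisfies $v\prec b$ with $vb\notin E$, so $a\prec v\prec b$ forces $ab\notin E$.

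For the lower bounds, I would use that $v\in L_{p,\beta}$ provides an independent set $I$ of size $\beta$ containing $v$ with $\pos_I(v)=p$. Splitting $I$ at $v$ by ingredient (ii) yields a left extension consisting of the $p-1$ vertices of $I$ preceding $v$ and a right extension consisting of the $\beta-p$ vertices following $v$; in particular $|I_\ell^v|\geq p-1$ and $|I_r^v|\geq\beta-p$ for the \emph{maximum} left and right extensions.

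For the matching upper bounds I would argue by contradiction using crucially that $v\in\I_\beta$, i.e.\ that $v$ lies in no independent set of size $\beta+1$. If $|I_\ell^v|>p-1$, then combining the maximum left extension with $v$ and with the right part $B$ of $I$ (of size $\beta-p$) via ingredient (i) produces an independent set containing $v$ of size $|I_\ell^v|+1+(\beta-p)>\beta$, contradicting $v\in\I_\beta$; hence $|I_\ell^v|=p-1$. Symmetrically, combining the left part $A$ of $I$ (of size $p-1$) with $v$ and the maximum right extension gives $|I_r^v|=\beta-p$. The stated chain of equalities $\beta-|I_\ell^v|-1=\beta-(p-1)-1=\beta-p$ is then immediate. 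The only step demanding care is the independence of the combined sets, which rests entirely on Property~\ref{prop-transitive}; the contradiction step is where the defining property of $\I_\beta$ (membership in a size-$\beta$ but not size-$(\beta+1)$ independent set) is used, and this is the point one must not overlook.
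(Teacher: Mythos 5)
Your proof is correct, and it matches the paper's intended reasoning: the paper states this as an observation without proof, precisely because it follows from the decomposition of any independent set containing $v$ into a left extension, $v$, and a right extension (established in the proof of Lemma~\ref{lem-trans-partition-poly} and reused in Lemma~\ref{lem-blocker-computepartition}), combined with the maximality of $\beta$ for $v \in \I_\beta$. You have simply made that implicit argument explicit, including the correct use of Property~\ref{prop-transitive} for independence of the combined sets and of the defining property of $\I_\beta$ for the upper bounds.
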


From Property~\ref{prop-trans-partition-clique} we know that each of the sets $L_{p,\alpha}$ for $p \in \intInterval{\alpha}$ is a clique. We will generalise this to the sets $L_{p,\beta}$, for $\beta \in \intInterval{\alpha}$.
\begin{property}
\label{prop-blocker-partitionclique}
Let $G=(V,E)$ be a co-comparability graph with $\alpha = \alpha(G)$.  Let $L_{p,\beta}$, with $\beta \in \intInterval{\alpha}, p \in \intInterval{\beta}$ be as in Definition~\ref{def-partition}.
Let $v \in L_{p,\beta}$. Then, $v$ is adjacent to all vertices in \[\bigcup_{ j \in \set{\beta,\dots, \alpha}, i \in \set{0, \dots,  j-\beta}} L_{p+i,j}.\]
\end{property}
\begin{proof}
Let $v \in L_{p,\beta}$. 
Assume there exists $u \in L_{q,\gamma}$, for $\gamma \in \set{\beta, \dots, \alpha}$, $q \in \set{p,\dots, p+\gamma-\beta}$ such that $uv \notin E$. 
We assume now that $v \prec u$. The case $u \prec v$ can be handled in a similar way.
Let $I_\ell^v$ be a maximum left extension of~$v$ and let $I_r^u$ be a maximum right extension of $u$.
From Observation~\ref{obs-extensions} we obtain that $I_\ell^v$ is a left extension of $u$ and $I_r^u$ is a right extension of $v$.
This gives us $I_\ell^v \prec v \prec u \prec I_r^u$ and thus, by Property~\ref{prop-transitive}, $I = I_\ell^v \cup \set{u,v} \cup I_r^u$ is an independent set.
Since $u \in L_{q,\gamma}$ we have, by Observation~\ref{obs-blocker-size-extensions}, that $|I_r^u| = \gamma-q \geq \gamma -(p + \gamma -\beta) = \beta-p$.
Furthermore, Observation~\ref{obs-blocker-size-extensions} tells us that $|I_\ell^v| = p-1$, since $v \in L_{p,\beta}$.
Thus, $|I| \geq p-1 + 2 + \beta-p = \beta+1$, a contradiction to $v \in I_\beta$.
\end{proof}

We can see in Figure~\ref{f-blocker-example} that vertex $v \in L_{1,\alpha-1}$ is complete to $L_{1,\alpha}$ and $L_{2,\alpha}$. 

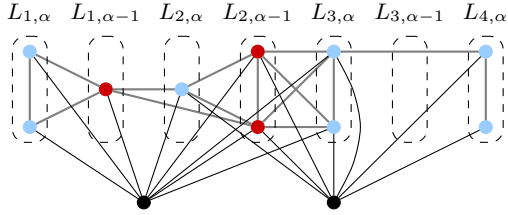
\begin{figure}[ht]
\begin{center}
\centering
  \begin{tikzpicture}

\tikzstyle{rahmen2}=[rounded corners = 5pt,draw, dashed, minimum width = 13pt, minimum height = 40pt]
\tikzstyle{nodename}=[font=\footnotesize]
          
    \node[bvertex] (a1) at (0,4.5){};
    \node[bvertex] (a2) at (0,3.5){};
    \node[rvertex] (a3) at (1,4){};
    \node[bvertex] (a4) at (2,4){};
    \node[rvertex] (a5) at (3,4.5){};
    \node[rvertex] (a6) at (3,3.5){};
    \node[bvertex] (a7) at (4,4.5){};
    \node[bvertex] (a8) at (4,3.5){};
    
    \node[bvertex] (a9) at (6,4.5){};
    \node[bvertex] (a10) at (6,3.5){};
    
    \node[vertex] (a11) at (1.5,2.5){};
    \node[vertex] (a12) at (4,2.5){};
    
    \draw[edge] (a1)--(a2);
    \draw[edge] (a2)--(a3);
    \draw[edge] (a1)--(a3);
    \draw[edge] (a3)--(a4);
    \draw[edge] (a3)--(a6);
    \draw[edge] (a4)--(a5);
    \draw[edge] (a4)--(a6);
    \draw[edge] (a5)--(a6);
    \draw[edge] (a5)--(a7);
    \draw[edge] (a5)--(a8);
    \draw[edge] (a6)--(a7);
    \draw[edge] (a6)--(a8);
    \draw[edge] (a7)--(a8);
    \draw[edge] (a7)--(a9);
    \draw[edge] (a9)--(a10);

    \foreach \i in {1,...,8}{
		  \draw[thinedge] (a11)--(a\i);
		}
		\foreach \i in {4,5,6,8,9,10}{
		  \draw[thinedge] (a12)--(a\i);
		}
		\begin{scope}[on background layer]
    		\draw[thinedge] plot [smooth] coordinates {(a12) (4.25,  3) (4.35, 3.5) (4.25, 4) (a7)};
    		\end{scope}

	\node[rahmen2]at (0,4){};	
	\node[rahmen2]at (1,4){};	
	\node[rahmen2]at (2,4){};
	\node[rahmen2]at (3,4){};	
	\node[rahmen2]at (4,4){};
	\node[rahmen2]at (5,4){};
	\node[rahmen2]at (6,4){};	

\node[nodename] at (0,5){$L_{1,\alpha}$};
\node[nodename] at (1,5){$L_{1,\alpha-1}$};
\node[nodename] at (2,5){$L_{2,\alpha}$};
\node[nodename] at (3,5){$L_{2,\alpha-1}$};
\node[nodename] at (4,5){$L_{3,\alpha}$};
\node[nodename] at (5,5){$L_{3,\alpha-1}$};
\node[nodename] at (6,5){$L_{4,\alpha}$};

\end{tikzpicture}

\caption{\label{f-blocker-example} The figure shows a co-comparability graph. The blue vertices are those in $\I_\alpha$, the red ones those in $\I_{\alpha-1}$. The black vertices are contained in neither $\I_\alpha$ nor in $\I_{\alpha-1}$. Note that in this example $\alpha = \alpha(G) = 4$.}
\end{center}
\end{figure}

\begin{lemma}
\label{lemma-blocker-starttoordering}
Let $G = (V,E)$ be a co-comparability graph with vertex ordering $\prec$, $\alpha = \alpha(G)$.
Let $u,v \in V$, $uv \notin E$ and $\pos(u) < \pos(v)$.
Then $u \prec v$.
\end{lemma}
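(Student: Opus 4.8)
The plan is to argue by contradiction, closely following the proof of Property~\ref{prop-trans-orderingtopos} but now relying on the generalised size formula for extensions rather than on the maximum‑independent‑set version. Since $uv\notin E$ forces $u\neq v$ and the ordering $\prec$ is total, assuming the conclusion fails means precisely that $v\prec u$. Under this assumption, together with $\pos(u)<\pos(v)$, I aim to produce a contradiction.

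First I would note that both $\pos(u)$ and $\pos(v)$ are well defined: every vertex lies in a largest independent set containing it, so $u\in L_{p,\beta}$ and $v\in L_{p',\beta'}$ for suitable parameters, and by Observation~\ref{obs-blocker-size-extensions} the absolute position of a vertex equals the size of one of its maximum left extensions plus one. Concretely, fixing maximum left extensions $I_\ell^u$ of $u$ and $I_\ell^v$ of $v$, we have $\pos(u)=|I_\ell^u|+1$ and $\pos(v)=|I_\ell^v|+1$.

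The crux is to show that $I_\ell^v$ is itself a left extension of $u$. Take any $w\in I_\ell^v$; by definition $w\prec v$ and $wv\notin E$. Since $v\prec u$, we obtain $w\prec v\prec u$, and combining $wv\notin E$ with the hypothesis $uv\notin E$, Property~\ref{prop-transitive} yields $wu\notin E$. Hence every vertex of $I_\ell^v$ precedes $u$ and $u$ is independent to $I_\ell^v$, so $I_\ell^v$ is a left extension of $u$ (equivalently, this is Observation~\ref{obs-extensions}). Maximality of $I_\ell^u$ then gives $|I_\ell^v|\le|I_\ell^u|$, and therefore $\pos(v)=|I_\ell^v|+1\le|I_\ell^u|+1=\pos(u)$, contradicting $\pos(u)<\pos(v)$.

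The only step that needs genuine care — and hence the main (albeit minor) obstacle — is the passage from $I_\ell^v$ being a left extension of $v$ to being a left extension of $u$: it truly uses the non‑edge $uv\notin E$ through Property~\ref{prop-transitive}, which is exactly the extra hypothesis available here. Everything else is bookkeeping with the absolute‑position/extension‑size correspondence supplied by Observation~\ref{obs-blocker-size-extensions}.
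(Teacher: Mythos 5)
Your proof is correct, but it follows a different route than the paper's. The paper's own proof of Lemma~\ref{lemma-blocker-starttoordering} pairs a maximum \emph{left} extension $I_\ell^v$ of $v$ with a maximum \emph{right} extension $I_r^u$ of $u$: under the contradiction hypothesis $v \prec u$, Property~\ref{prop-transitive} makes $I_\ell^v \cup \{v\} \cup \{u\} \cup I_r^u$ an independent set, whose size is then counted via Observation~\ref{obs-blocker-size-extensions} as $\pos(v)-\pos(u)+1+\beta_u \geq \beta_u+1$, contradicting the maximality of $\beta_u$ (the largest independent set containing $u$). You instead compare left extensions only: you show the maximum left extension of $v$ is itself a left extension of $u$ (via Property~\ref{prop-transitive}, genuinely using the non-edge $uv$), and conclude $\pos(v) = |I_\ell^v|+1 \leq |I_\ell^u|+1 = \pos(u)$ directly, contradicting $\pos(u) < \pos(v)$. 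This is the template of the paper's proof of Property~\ref{prop-trans-orderingtopos}, lifted to the non-maximum setting via Observation~\ref{obs-blocker-size-extensions} --- exactly as you describe --- whereas the paper chose not to reuse that template here. Your version is arguably cleaner: it needs only the left-extension half of Observation~\ref{obs-blocker-size-extensions} and avoids introducing $\beta_u$ and the counting step. A bonus of your write-up is that it makes explicit that the passage from ``left extension of $v$'' to ``left extension of $u$'' requires the hypothesis $uv \notin E$; the paper's Observation~\ref{obs-extensions}, cited for this step in Property~\ref{prop-trans-orderingtopos}, omits that non-adjacency condition from its statement even though it is needed there. (One trivial quibble: it is $\pos(u) < \pos(v)$, not $uv \notin E$, that forces $u \neq v$, since the paper's graphs have no self-loops; this does not affect the argument.)
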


\begin{proof}
Let $I_\ell^v$ be a maximum left extension of $v$ and $I_r^u$ be a maximum right extension of~$u$.
Suppose for a contradiction that $v \prec u$.
By definition of a left extension (resp. right extension) we get that $I_\ell^v \prec v$ (resp. $u \prec I_r^u $).
Since $v \prec u$ it follows from Property~\ref{prop-transitive} that $I_\ell^v \prec v \prec u \prec I_r^u$. 
Further, since $I_\ell^v \cup v$ and $u \cup I_r^u $ are both independent sets and $uv \notin E$ the set $I = I_\ell^v \cup v \cup u \cup I_r^u$ is an independent set.
Let $\beta_u$ be the size of a maximum independent set in $G$ containing $u$.
From Observation~\ref{obs-blocker-size-extensions} we get that
\[|I| = |I_\ell^v| + 2 + |I_r^u| = \pos(v)-1 + 2 + \beta_u - \pos(u) = \pos(v)- \pos(u) + 1 + \beta_u \geq \beta_u + 1\]
a contradiction to $\beta_u$ being the size of a maximum independent set containing $u$.
Hence, $u \prec v$.
\end{proof}

\begin{lemma}
\label{lemma-blocker-orderingtostart}
Let $G = (V,E)$ be a co-comparability graph with vertex ordering $\prec$, $\alpha = \alpha(G)$.
Let $u,v \in V$, $uv \notin E$ and $u \prec v$.
Then $\pos(u) < \pos(v)$.
\end{lemma}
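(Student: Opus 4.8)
The statement to prove is Lemma~\ref{lemma-blocker-orderingtostart}, which is essentially the converse of Lemma~\ref{lemma-blocker-starttoordering}: given non-adjacent $u,v$ with $u \prec v$, we must show $\pos(u) < \pos(v)$.

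The plan is to derive the contrapositive-flavoured contradiction by using the already-proven Lemma~\ref{lemma-blocker-starttoordering}. Suppose for a contradiction that $\pos(u) \geq \pos(v)$. The cleanest route is to split into two cases. If $\pos(u) = \pos(v)$, I would derive a contradiction directly: the two vertices lie in the same position but are non-adjacent, and since each $L_{p,\beta}$ is a clique by Property~\ref{prop-blocker-partitionclique}, I need to argue they cannot coexist non-adjacently at the same absolute position. Concretely, let $\beta_u, \beta_v$ be the sizes of the largest independent sets containing $u$ and $v$ respectively, so $u \in L_{\pos(u),\beta_u}$ and $v \in L_{\pos(v),\beta_v}$; if they have equal position then by Property~\ref{prop-blocker-partitionclique} (applied with whichever of $\beta_u,\beta_v$ is smaller, since the other position index $i=0$ matches) $u$ and $v$ must be adjacent, contradicting $uv \notin E$.

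For the case $\pos(u) > \pos(v)$, the natural move is to invoke Lemma~\ref{lemma-blocker-starttoordering} with the roles of $u$ and $v$ swapped: applying that lemma to the pair $(v,u)$ with $\pos(v) < \pos(u)$ yields $v \prec u$, directly contradicting the hypothesis $u \prec v$ (recall $\prec$ is a strict total order, so $u \prec v$ and $v \prec u$ cannot both hold). This handles the strict inequality case immediately and is the least laborious part.

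The main obstacle I anticipate is the equality case $\pos(u) = \pos(v)$, since Lemma~\ref{lemma-blocker-starttoordering} only speaks about strict position inequalities and gives no leverage when positions coincide. I would handle it via the clique structure of the position classes as sketched above, being careful that Property~\ref{prop-blocker-partitionclique} is stated so that a vertex in $L_{p,\beta}$ is complete to the appropriate $L_{p+i,j}$ with $j \geq \beta$; I must check that taking $i=0$ and matching the smaller of the two $\beta$-values correctly forces adjacency of two same-position vertices. An alternative, avoiding the case split entirely, would be to rerun the left/right-extension argument from Lemma~\ref{lemma-blocker-starttoordering} directly: assume $\pos(u)\ge\pos(v)$, build $I = I_\ell^v \cup \{v,u\} \cup I_r^u$ (now using $u\prec v$ to place things and Property~\ref{prop-transitive} to confirm independence), and count its size via Observation~\ref{obs-blocker-size-extensions} to exceed $\beta_u$, a contradiction; this is likely the route the authors intend given the strong parallel to the preceding lemma, so I would present that as the streamlined proof.
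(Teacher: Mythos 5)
Your primary, case-split argument is exactly the paper's proof: assume for contradiction $\pos(u)\geq\pos(v)$; when $\pos(u)=\pos(v)=p$, write $u\in L_{p,\beta}$, $v\in L_{p,\beta'}$ and apply Property~\ref{prop-blocker-partitionclique} with $i=0$ to force $uv\in E$; when $\pos(u)>\pos(v)$, apply Lemma~\ref{lemma-blocker-starttoordering} to the pair $(v,u)$ to conclude $v\prec u$, contradicting $u\prec v$. Had you stopped there, nothing more would need to be said.

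However, the ``streamlined'' alternative that you say you would actually present is not the paper's route, and as written it fails. Under $u\prec v$, the set $I_\ell^v\cup\set{v,u}\cup I_r^u$ does not chain in the ordering: $I_\ell^v$ may contain vertices lying strictly between $u$ and $v$, and nothing forces those to be non-adjacent to $u$ (likewise vertices of $I_r^u$ lying between $u$ and $v$ may be adjacent to $v$, and pairs across $I_\ell^v$ and $I_r^u$ may be adjacent), so Property~\ref{prop-transitive} cannot be invoked to certify independence; that property needs the chain structure $I_\ell\prec x\prec y\prec I_r$, which is precisely what is missing here. The count is also off: by Observation~\ref{obs-blocker-size-extensions} this set has size $(\pos(v)-1)+2+(\beta_u-\pos(u))=\beta_u+1-(\pos(u)-\pos(v))$, which is at most $\beta_u$ whenever $\pos(u)>\pos(v)$, so even granting independence the count yields no contradiction in the strict case. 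The mirrored composition does work: take $I=I_\ell^u\cup\set{u,v}\cup I_r^v$ with $I_\ell^u$ a maximum left extension of $u$ and $I_r^v$ a maximum right extension of $v$; this chains as $I_\ell^u\prec u\prec v\prec I_r^v$, is independent by Property~\ref{prop-transitive}, and by Observation~\ref{obs-blocker-size-extensions} has size $(\pos(u)-1)+2+(\beta_v-\pos(v))=\beta_v+1+(\pos(u)-\pos(v))\geq\beta_v+1$, where $\beta_v$ is the size of a largest independent set containing $v$, a contradiction since $I$ contains $v$. In that corrected form the direct argument is a legitimate alternative to the paper's case split (it bypasses Property~\ref{prop-blocker-partitionclique} entirely), but as you wrote it, with the extensions anchored at the wrong endpoints and the count made against $\beta_u$ instead of $\beta_v$, it is the one part of your proposal that would not survive refereeing.
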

\begin{proof}
Suppose for a contradiction that $\pos(u) \geq \pos(v)$. 
Consider first the case where $\pos(u) = \pos(v) = p$, for some $p \in \intInterval{\alpha}$. There are $\beta, \beta' \in \{p,\dots, \alpha\}$ such that $u \in L_{p,\beta}$ and $v \in L_{p,\beta'}$.Without loss of generality we may assume that $\beta < \beta'$. Then from Property~\ref{prop-blocker-partitionclique} it follows that $uv \in E$, a contradiction. Thus, $\pos(u) > \pos(v)$. Further, Lemma~\ref{lemma-blocker-starttoordering} tells us that $v \prec u$, a contradiction.
\end{proof}

\begin{lemma} \label{lem-blocker-computepartition}
The partition of $V$ into sets $L_{p,\beta}$ can be found in $\mathcal{O}(|V|^2)$.
\end{lemma}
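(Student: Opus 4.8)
The plan is to mirror the computation in Lemma~\ref{lem-trans-partition-poly}, but now recording for every vertex the size of the largest independent set containing it, which gives the index $\beta$, together with the relative position inside such a set, which gives the index $p$. Concretely, I would reuse the two functions $\leftext(\cdot)$ and $\rightext(\cdot)$ already introduced: for a vertex $v$, $\leftext(v)$ is the size of a maximum left extension and $\rightext(v)$ the size of a maximum right extension. As argued in the proof of Lemma~\ref{lem-trans-partition-poly}, any independent set containing $v$ decomposes into a left extension of $v$, the vertex $v$ itself, and a right extension of $v$; hence the largest independent set containing $v$ has size $\beta_v = \leftext(v)+\rightext(v)+1$, and the relative position of $v$ in such a set is $\leftext(v)+1$. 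By Definition~\ref{def-partition} and Observation~\ref{obs-blocker-size-extensions}, this is exactly what determines the pair $(p,\beta)$ assigned to $v$, namely
\[
v \in L_{p,\beta} \quad\text{with}\quad \beta = \leftext(v)+\rightext(v)+1,\ \ p = \leftext(v)+1.
\]

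The first step is therefore to compute $\leftext(v)$ and $\rightext(v)$ for every $v \in V$ using the same dynamic-programming recurrences
\begin{align*}
\leftext(v) &= \max_{u \in V,\ u \prec v,\ uv \notin E}\big(\leftext(u)+1\big),\\
\rightext(v) &= \max_{u \in V,\ v \prec u,\ uv \notin E}\big(\rightext(u)+1\big),
\end{align*}
with the base cases $\leftext(v_1)=0$ and $\rightext(v_n)=0$ for the extreme vertices of the ordering $\prec$. Iterating through the vertices once in increasing $\prec$-order and once in decreasing $\prec$-order computes all values in $\mathcal{O}(|V|^2)$ time, exactly as in Lemma~\ref{lem-trans-partition-poly}. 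The second step is to read off, for each $v$, its pair $(p,\beta)$ from the displayed formulas and place $v$ into the corresponding set $L_{p,\beta}$; this bucketing is a single pass over the vertices and costs $\mathcal{O}(|V|)$. Since every vertex belongs to a unique largest independent set size and a unique relative position by Lemma~\ref{lem-fixedposition-nonmax}, this assignment is well defined and produces the claimed partition.

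The main justification to supply, rather than a genuine obstacle, is that the recurrences correctly compute the extension sizes even for non-maximum vertices; this follows because Property~\ref{prop-transitive} guarantees that concatenating a left extension, $v$, and a right extension always yields an independent set, so no adjacency is missed when the pieces are glued, and the recurrence explores all $\prec$-predecessors (resp.\ successors) that are non-adjacent to $v$. I expect the only delicate point to be observing that the very same $\mathcal{O}(|V|^2)$ computation simultaneously yields $\beta_v$ and $p$ for \emph{all} vertices, not merely those in $\mathcal{I}$; once $\leftext$ and $\rightext$ are available this is immediate, so the overall running time remains $\mathcal{O}(|V|^2)$, which is what the lemma asserts.
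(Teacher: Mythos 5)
Your proposal is correct and follows essentially the same route as the paper's own proof: both reuse the functions $\leftext(\cdot)$ and $\rightext(\cdot)$ from Lemma~\ref{lem-trans-partition-poly}, invoke Property~\ref{prop-transitive} to glue a left extension, the vertex, and a right extension into an independent set, and read off $\beta = \leftext(v)+\rightext(v)+1$ and $p = \leftext(v)+1$ for every vertex in $\mathcal{O}(|V|^2)$ total time. Your write-up is in fact slightly more explicit than the paper's (e.g.\ the well-definedness via Lemma~\ref{lem-fixedposition-nonmax} and the final bucketing pass), but there is no substantive difference in approach.
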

\begin{proof}
To see that we can find this partition in polynomial time, recall the proof of Lemma~\ref{lem-trans-partition-poly}.
We defined $\leftext(v)$ respectively $\rightext(v)$ as the size of a left respectively right extension of $v$ of maximum size for any vertex $v\in V$. We showed that we can calculate both functions in $\mathcal{O}(|V|^2)$.
From Property~\ref{prop-transitive}, we know that for a vertex $v \in V$, we can combine every left extension $I_\ell$ of $v$ and every right extension $I_r$ of $v$ together with $v$ to an independent set $I_\ell \cup \set{v} \cup I_r$.
Consider a maximum independent set containing $v$. This consists of $v$ together with a maximum left extension and a maximum right extension of $v$.
Thus, the values computed in $\leftext(v)$ and $\rightext(v)$ lead to a maximum independent set containing~$v$ and $\pos(v) = \leftext(v) + 1$.
\end{proof}


\section{Deletion Blocker in Co-Comparability Graphs}
\label{sec-blocker}
In this section, we will show that \del$(\alpha)$ can be solved in polynomial time in co-comparability graphs. 
Let $G = (V,E)$ be a co-comparability graph with vertex ordering~$\prec$, let $\alpha = \alpha(G)$ and let $d>0$ be an integer.  
We will construct a directed graph $G'= (V', A')$ such that $(G',k)$ is a \yes-instance of \mincut \ if and only if $(G,d,k)$ is a \yes-instance of \del($\alpha$), for some integer $k>0$. 
The construction is similar to the one in Section~\ref{sec:trans}. We will show the equivalence between the two instances in Theorem~\ref{theo-blockeralpha}.
We can make an observation similar to Observation~\ref{obs-trans-intersection}.

\begin{observation}\label{obs-blocker-intersection}
Let $G = (V,E)$ be a co-comparability graph with a vertex ordering $\prec$. Let $S \subseteq V$.
$S$ is a $d$-deletion blocker of $G$ if and only if it contains at least one vertex from every independent set of size $\alpha-d+1$ of $G$.
\end{observation}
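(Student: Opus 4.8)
The plan is to recognize that this statement is essentially a direct reformulation of the definition of a $d$-deletion blocker, and in fact holds for every graph (the co-comparability hypothesis plays no role). Recall that $S$ is a $d$-deletion blocker exactly when $\alpha(G-S)\le\alpha-d$, and that the independent sets of $G-S$ are precisely the independent sets $I$ of $G$ with $I\cap S=\varnothing$. Hence $\alpha(G-S)\le\alpha-d$ is equivalent to the assertion that $G$ admits no independent set of size at least $\alpha-d+1$ that is disjoint from $S$. I would prove both implications against this reformulation.

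For the forward direction, I would assume that $S$ is a $d$-deletion blocker and take an arbitrary independent set $I$ of $G$ with $|I|=\alpha-d+1$. If we had $I\cap S=\varnothing$, then $I$ would be an independent set of $G-S$ of size $\alpha-d+1$, forcing $\alpha(G-S)\ge\alpha-d+1>\alpha-d$ and contradicting that $S$ is a blocker. Thus $I\cap S\neq\varnothing$, i.e.\ $S$ contains a vertex of $I$.

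For the converse, I would assume that $S$ meets every independent set of size $\alpha-d+1$ and suppose, for a contradiction, that $\alpha(G-S)\ge\alpha-d+1$. Then $G-S$ contains an independent set $I'$ with $|I'|\ge\alpha-d+1$; choosing any subset $I\subseteq I'$ with $|I|=\alpha-d+1$ gives an independent set (every subset of an independent set is independent) that is disjoint from $S$ (since $I\subseteq I'\subseteq V\setminus S$), contradicting the hypothesis. Therefore $\alpha(G-S)\le\alpha-d$, and $S$ is a $d$-deletion blocker.

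The only point requiring any care is the passage, in the converse, from an independent set of size at least $\alpha-d+1$ to one of size exactly $\alpha-d+1$, handled by the trivial remark that subsets of independent sets remain independent; there is no genuine obstacle here, and the argument mirrors the corresponding Observation~\ref{obs-trans-intersection} for transversals.
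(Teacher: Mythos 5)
Your proof is correct, and it is exactly the argument the paper has in mind: the statement is labelled an Observation and given no proof precisely because it is the direct unfolding of the definition of a $d$-deletion blocker that you spell out (including your accurate remark that the co-comparability hypothesis plays no role here). Nothing further is needed.
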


By Observation~\ref{obs-blocker-intersection} it suffices to consider independent sets of size $\alpha-d+1$.
Thus, in the following, we construct a directed graph $G'$ with source $s$ and sink $t$, such that every $s$-$t$-path in $G'$ corresponds to an independent set of size $\alpha-d+1$ in $G$. 
The one-to-one correspondence will be proven in Lemma~\ref{lem-blocker-indsetsarepaths}. 
An $s$-$t$-cut will then correspond to a $d$-deletion blocker in $G$ (see Theorem~\ref{theo-blockeralpha}).

Let us now describe the construction of $G' = (V', A')$.
Recall that $\I_\beta = \bigcup_{p \in \intInterval{\alpha}} L_{p,\beta}$ is the set of vertices which are contained in an independent set of size $\beta$ but not of size~$\beta +1$, for $\beta \in \intInterval{\alpha}$. 
The vertex set $V'$ of $G'$ consists of $d$ copies $U_{1,\beta},\dots, U_{d,\beta}$ of $\I_\beta$, for all $\beta \in \set{\alpha-d+1, \dots, \alpha}$, and two additional vertices $s,t$. That is 
\[V' = \bigcup_{ \beta \in \set{\alpha-d+1, \dots, \alpha},\ell \in \intInterval{d}} U_{\ell, \beta} \cup \set{s,t}.\] 
We denote by $L_{p,\ell, \beta}$ the set $L_{p,\beta}$ in $U_{\ell,\beta}$, for $p \in \intInterval{\alpha}$, $\ell\in \intInterval{d}$ and $\beta \in \set{\alpha-d+1, \dots, \alpha}$.
We say that $\ell$ is the \emph{level} of the vertices in $U_{\ell,\beta}$, denoted by $\level(x) = \ell$ for $x \in U_{\ell,\beta}$. 
For simplicity, we adopt the notion of positions for all vertices in $V'\setminus \set{s,t}$, that is, for every vertex $x \in V'\setminus\set{s,t}$ that corresponds to some vertex $v \in L_p$, for $ p \in \intInterval{\alpha}$, we will also use $\pos(x)$ in order to actually refer to the position of $v$, the vertex that $x$ corresponds to in $G$.
Furthermore, we set $\pos(s) = 0$ and $\level(s)= 1$ and for $t$ we have $\pos(t) = \alpha+1$ and $\level(t) = d$.

Let $x,y \in V'\setminus\set{s,t}$, where $x \in L_{p,\ell,\beta}$ and $y \in L_{p',\ell',\beta'}$, with $p, p' \in \intInterval{\alpha}$, $\ell,\ell' \in \intInterval{d}$ and $\beta, \beta' \in \set{\alpha-d+1, \dots, \alpha}$.
Let $u,v \in V$, where $x$ corresponds to $u$ and $y$ corresponds to $v$. We add an arc $(x,y)$ if $uv \notin E$ and $p' = p+g+1$, $\ell' = \ell+g$ for some $g \geq 0$.
Similar to the construction in Section~\ref{sec:trans}, we add an arc $(s,y)$ if $p' = g+1$, $\ell' = g+1$, for some $g \geq 0$ and we add an arc $(x,t)$ if $\pos(t) = p+g+1$, $\level(t) = \ell + g$, for some $g \geq 0$.
We can see that Observation~\ref{obs-trans-construction} holds for this construction as well.

Figures~\ref{f-blocker-galpha} and \ref{f-blocker-gprime-complete} give examples for the graph $G'$ constructed from the graph in Figure~\ref{f-blocker-example} for $d=1$ and $d=2$. 
Note that some sets $L_{p,\ell,\beta}$ in Figure~\ref{f-blocker-gprime-complete} are not contained in any $s$-$t$-path and thus could be removed from the graph. More such sets appear when $d > 2$. We kept them to make the definitions and proofs easier.

\begin{figure}[ht]
\begin{center}
\centering
  \begin{tikzpicture} 

\tikzstyle{rahmen}=[rounded corners = 5pt,draw, dashed, minimum width = 13pt, minimum height = 40pt]
\tikzstyle{nodename}=[font=\small]

	\node[vertex] (s) at (-2,4){};
          
    \node[vertex] (a1) at (0,4.5){};
    \node[vertex] (a2) at (0,3.5){};

    \node[vertex] (a3) at (2,4){};

    \node[vertex] (a4) at (4,4.5){};
    \node[vertex] (a5) at (4,3.5){};
    
    \node[vertex] (a6) at (6,4.5){};
    \node[vertex] (a7) at (6,3.5){};
    
    \node[vertex] (t) at (8,4){};

    \draw[arc] (a2)--(a3);
    \draw[arc] (a1)--(a3);
    \draw[arc] (a3)--(a4);
    \draw[arc] (a3)--(a5);
    \draw[arc] (a4)--(a7);
    \draw[arc] (a5)--(a6);
    \draw[arc] (a5)--(a7);
    
     \draw[arc] (s)--(a1);
    \draw[arc] (s)--(a2);
    \draw[arc] (a6)--(t);
    \draw[arc] (a7)--(t);

\node[rahmen] (r) at (0,4){};
\node[rahmen] (r) at (2,4){};
\node[rahmen] (r) at (4,4){};
\node[rahmen] (r) at (6,4){};

\node[] (r) at (0,5){$L_{1,1, \alpha}$};
\node[] (r) at (2,5){$L_{1,1, \alpha}$};
\node[] (r) at (4,5){$L_{1,1, \alpha}$};
\node[] (r) at (6,5){$L_{1,1, \alpha}$};

\node[nodename, , below] at (-2,4){$s$};
\node[nodename, , above] at (8,4){$t$};

\end{tikzpicture}
\caption{\label{f-blocker-galpha}The graph $G'$ corresponding to the graph $G$ from Figure \ref{f-blocker-example} for $d=1$.}
\end{center}
\end{figure}
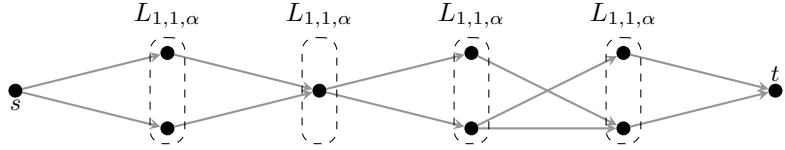

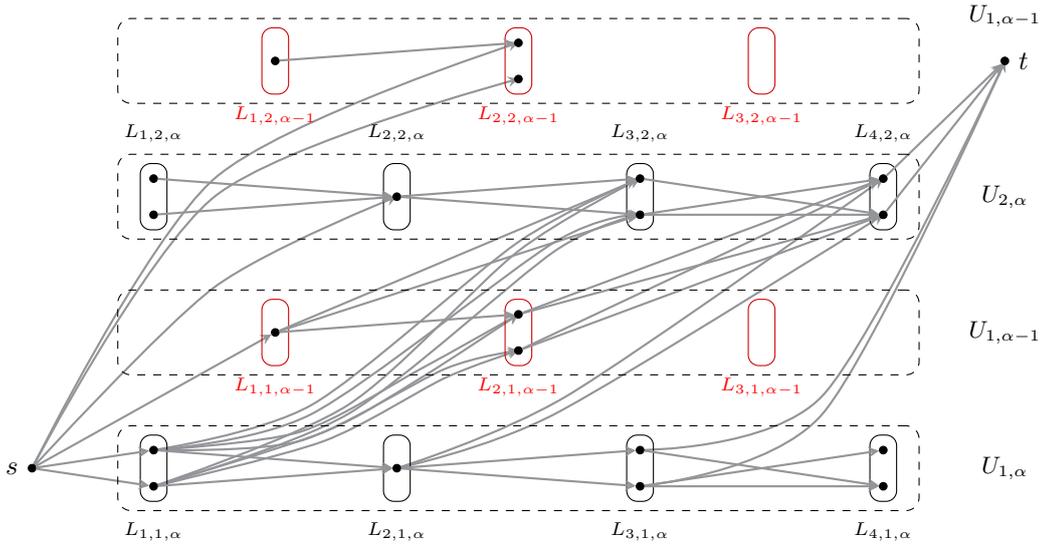
\begin{figure}[ht]
\begin{center}
\centering
  \begin{tikzpicture} 
  \begin{scope}[yscale = 1.2, xscale = 1.6]
  
\tikzstyle{vertex}=[thin,circle,inner sep=0.cm, minimum size=1mm, fill=black, draw=black]
\tikzstyle{rvertex}=[thin,circle,inner sep=0.cm, minimum size=1mm, fill=nicered,draw=nicered]


\tikzstyle{rahmen}=[rounded corners = 5pt,draw, dashed, minimum width = 300pt, minimum height = 32pt]
\tikzstyle{rahmenred}=[rounded corners = 5pt,draw=nicered, dashed, minimum width = 208pt, minimum height = 32pt]
\tikzstyle{vertexsetb}=[rounded corners = 4pt,draw, minimum width = 10pt, minimum height = 25pt]
\tikzstyle{vertexsetred}=[rounded corners = 4pt,draw=nicered, minimum width = 10pt, minimum height = 25pt,]

\node[vertexsetb](L11a) at (2,1){};
\node[vertexsetb](L21a) at (4,1){};
\node[vertexsetb](L31a) at (6,1){};
\node[vertexsetb](L41a) at (8,1){};

\node[vertex](v11) at (2,0.8){};
\node[vertex](v21) at (2,1.2){};
\node[vertex](v31) at (4,1){};
\node[vertex](v41) at (6,1.2){};
\node[vertex](v51) at (6,0.8){};
\node[vertex](v61) at (8,1.2){};
\node[vertex](v71) at (8,0.8){};

\node[vertex](v12) at (2,3.8){};
\node[vertex](v22) at (2,4.2){};
\node[vertex](v32) at (4,4){};
\node[vertex](v42) at (6,4.2){};
\node[vertex](v52) at (6,3.8){};
\node[vertex](v62) at (8,4.2){};
\node[vertex](v72) at (8,3.8){};

\node[vertexsetb](L12a) at (2,4){};
\node[vertexsetb](L22a) at (4,4){};
\node[vertexsetb](L32a) at (6,4){};
\node[vertexsetb](L42a) at (8,4){};

\node[vertexsetred](L11a1) at (3,2.5){};
\node[vertexsetred](L21a1) at (5,2.5){};
\node[vertexsetred](L31a1) at (7,2.5){};

\node[vertexsetred](L11a1) at (3,5.5){};
\node[vertexsetred](L21a1) at (5,5.5){};
\node[vertexsetred](L31a1) at (7,5.5){};

\node[vertex](u1) at (3,2.5){};
\node[vertex](u2) at (5,2.3){};
\node[vertex](u3) at (5,2.7){};

\node[vertex](u21) at (3,5.5){};
\node[vertex](u22) at (5,5.3){};
\node[vertex](u23) at (5,5.7){};

    \node[vertex, label=left : $s$] (s) at (1,1){};
    \node[vertex, label=right:$t$] (t) at (9,5.5){};

    \draw[arc] (s)--(v11);
    \draw[arc] (s)--(v21);
   \begin{scope}[on behind layer]
     \draw[arc] plot [smooth] coordinates {(s) (2.5,3)  (v32)};
    \draw[arc] (s)--(u1);
    \draw[arc]plot [smooth] coordinates { (s)(2.5, 4)(u22)};
	\draw[arc] plot [smooth] coordinates {(s)(2.5, 4.2)(u23)};

    \draw[arc] plot [smooth] coordinates {(v41) (7.5,1.8)  (t)};
    \draw[arc] plot [smooth] coordinates {(v51) (7.5,1.6)  (t)};
        \end{scope}
    \draw[arc] (v62)--(t);
    \draw[arc] (v72)--(t);
    \draw[arc] (v11)--(v31);
    \draw[arc] (v12)--(v32);
    \draw[arc] (v21)--(v31);
    \draw[arc] (v22)--(v32);
    \draw[arc] (v31)--(v41);
    \draw[arc] (v31)--(v51);
    \draw[arc] (v32)--(v42);
    \draw[arc] (v32)--(v52);
    \draw[arc] (v41)--(v71);
    \draw[arc] (v42)--(v72);
    \draw[arc] (v51)--(v61);
    \draw[arc] (v51)--(v71);
    \draw[arc] (v52)--(v62);
    \draw[arc] (v52)--(v72);
  
    \draw[arc] (u1)--(u3);
    \draw[arc] (u21)--(u23);
\begin{scope}[on behind layer]
    \draw[arc] plot [smooth] coordinates {(v11) (3.4,1.4) (4.2,2) (u3)};
    \draw[arc] plot [smooth] coordinates {(v11) (3.4,1.3) (4.4,2) (u2)};
    \draw[arc] plot [smooth] coordinates {(v21) (3.2,1.4) (4.2,2.1)  (u3)};
    \draw[arc] plot [smooth] coordinates {(v21) (3.2,1.3) (4.4,2.1) (u2)};
    \end{scope}
    
    \draw[arc] (u2)--(v62);
    \draw[arc] (u2)--(v72);
    \draw[arc] (u3)--(v62);
    \draw[arc] (u3)--(v72);
    \draw[arc] (u1)--(v42);
    \draw[arc] (u1)--(v52);
    
\begin{scope}[on behind layer]
    \draw[arc] plot [smooth] coordinates {(v11) (3.6,1.7) (5,3.4) (v42)};
    \draw[arc] plot [smooth] coordinates {(v11) (3.6,1.6) (5.3,3.4) (v52)};
    \draw[arc] plot [smooth] coordinates {(v21) (3.3,1.7) (5,3.5) (v42)};
    \draw[arc] plot [smooth] coordinates {(v21) (3.3,1.6) (5.3,3.5) (v52)};
    \draw[arc] plot [smooth] coordinates {(v31) (5.4,1.8)  (v62)};
    \draw[arc] plot [smooth] coordinates {(v31) (5.3,1.6)  (v72)};
        \end{scope}

\node[font = \scriptsize, below] at (2,0.5){$L_{1,1,\alpha}$};
\node[font = \scriptsize, below] at (4,0.5){$L_{2,1,\alpha}$};
\node[font = \scriptsize, below] at (6,0.5){$L_{3,1,\alpha}$};
\node[font = \scriptsize, below] at (8,0.5){$L_{4,1,\alpha}$};
\node[font = \scriptsize, above] at (2,4.5){$L_{1,2,\alpha}$};
\node[font = \scriptsize, above] at (4,4.5){$L_{2,2,\alpha}$};
\node[font = \scriptsize, above] at (6,4.5){$L_{3,2,\alpha}$};
\node[font = \scriptsize, above] at (8,4.5){$L_{4,2,\alpha}$};
\node[font = \scriptsize, below, red] at (3,2.1){$L_{1,1,\alpha-1}$};
\node[font = \scriptsize, below, red] at (5,2.1){$L_{2,1,\alpha-1}$};
\node[font = \scriptsize, below, red] at (7,2.1){$L_{3,1,\alpha-1}$};
\node[font = \scriptsize, below, red] at (3,5.1){$L_{1,2,\alpha-1}$};
\node[font = \scriptsize, below, red] at (5,5.1){$L_{2,2,\alpha-1}$};
\node[font = \scriptsize, below, red] at (7,5.1){$L_{3,2,\alpha-1}$};
\node[font = \small ] at (9,4){$U_{2,\alpha}$};
\node[font = \small ] at (9,1){$U_{1,\alpha}$};
\node[font = \small, ] at (9,2.5){$U_{1,\alpha-1}$};
\node[font = \small, ] at (9,6){$U_{1,\alpha-1}$};

\node[rahmen] at (5,1){};
\node[rahmen] at (5,4){};
\node[rahmen] at (5,2.5){};
\node[rahmen] at (5,5.5){};
\end{scope}
	
\end{tikzpicture}
\caption{\label{f-blocker-gprime-complete} The graph $G'$ corresponding to the graph $G$ from Figure \ref{f-blocker-example} for $d=2$. }
\end{center}
\end{figure}

We give three properties similar to Properties~\ref{prop-trans-emptypositionspath}-\ref{prop-trans-emptypositionsindset} which will help us to prove the one-to-one correspondence between paths in $G'$ and independent sets in $G$, which we show in Lemma~\ref{lem-blocker-indsetsarepaths}. We omit the proofs due to their similarity to the proofs of Properties~\ref{prop-trans-emptypositionspath}-\ref{prop-trans-emptypositionsindset}.

\begin{property}
\label{prop-blocker-emptypositionspath}
Let $P = s,x_1,\dots, x_h, t$ be an $s$-$t$-path in $G'$. There exist exactly $d-1$ distinct integers in $\intInterval{\alpha}$, say $g_1,\ldots, g_{d-1}$, such that $\pos(x_i)\neq g_1,\ldots,g_{d-1}$ for all $i\in \intInterval{h}$.
For any other integer $g \in \intInterval{\alpha}\setminus \set{g_1,\dots, g_{d-1}}$,  there exists exactly one vertex $x \in P-\{s,t\}$ such that $\pos (x)=g$. 
\end{property}

\begin{property}
\label{prop-blocker-Pathlength}
Every $s$-$t$-path $P$ in $G'$ has $\alpha-d+3$ vertices.
\end{property}

\begin{property}
\label{prop-blocker-emptypositionsindset}
Let $I = \set{v_1,\dots, v_{\alpha-d+1}}$ be an independent set in $G$. 
There exist exactly $d-1$ distinct integers in $\intInterval{\alpha}$, say $g_1,\ldots, g_{d-1}$ such that $\pos(v_i)\neq g_1,\ldots,g_{d-1}$ for all $i\in \intInterval{\alpha-d+1}$.
\end{property}

Using Property~\ref{prop-blocker-emptypositionspath}-\ref{prop-blocker-emptypositionsindset} we can now proof the one-to-one correspondence between $s$-$t$-paths in $G'$ and independent sets of size $\alpha-d+1$ in $G$.

\begin{lemma}
\label{lem-blocker-indsetsarepaths}
Let $G=(V,E)$ be a co-comparability graph, $\alpha = \alpha(G)$, $d> 0$ an integer and $G'$ constructed as above. 
Every independent set of size $\alpha-d+1$ in $G$ corresponds to an $s$-$t$-path in $G'$ and vice versa.
\end{lemma}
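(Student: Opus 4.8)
The plan is to mirror the structure of the proof of Lemma~\ref{indsetsarepaths}, adapting the two directions to the setting where the independent sets need not be maximum but merely have size $\alpha-d+1$. The key observation is that the whole construction of $G'$ in this section is the direct analogue of the one in Section~\ref{sec:trans}, except that arcs are now added whenever two corresponding vertices are non-adjacent (rather than requiring max-extendability), and that we keep copies of $\I_\beta$ for every $\beta \in \set{\alpha-d+1,\dots,\alpha}$. Since Observation~\ref{obs-trans-construction} and Properties~\ref{prop-blocker-emptypositionspath}--\ref{prop-blocker-emptypositionsindset} have already been established as carrying over, I would lean on them exactly as the transversal proof leaned on Properties~\ref{prop-trans-emptypositionspath}--\ref{prop-trans-emptypositionsindset}.

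For the first direction, I would take an $s$-$t$-path $P = s, x_1,\dots,x_h,t$. By Property~\ref{prop-blocker-Pathlength} it has $\alpha-d+3$ vertices, so $h=\alpha-d+1$. Letting $v_i$ be the vertex of $G$ corresponding to $x_i$, I would argue that $\set{v_1,\dots,v_{\alpha-d+1}}$ is an independent set: consecutive vertices $v_i,v_{i+1}$ are non-adjacent by the arc definition, the positions are strictly increasing by Observation~\ref{obs-trans-construction}(a), and hence $v_i \prec v_{i+1}$ by Lemma~\ref{lemma-blocker-orderingtostart}; transitivity via Property~\ref{prop-transitive} then gives pairwise non-adjacency just as in Claim~\ref{claim-trans-isindset}. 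Crucially, here I do \emph{not} need to prove max-extendability, so this direction is actually shorter than its transversal counterpart---I simply obtain an independent set of the right size.

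For the converse, given an independent set $I=\set{v_1,\dots,v_{\alpha-d+1}}$ with $\pos(v_1)<\dots<\pos(v_{\alpha-d+1})$, I would define the level of the vertex $x_i$ corresponding to $v_i$ by $\ell = \vert\set{g_k \mid g_k < \pos(v_i),\ k\in\intInterval{d-1}}\vert + 1$, using the $g_1,\dots,g_{d-1}$ from Property~\ref{prop-blocker-emptypositionsindset}, and then verify that the arcs $(s,x_1)$, $(x_i,x_{i+1})$ and $(x_{\alpha-d+1},t)$ all exist, exactly as in Claim~\ref{claim-trans-arcsexist}. The arc condition $\pos(x_{i+1})=\pos(x_i)+g+1$, $\ell_{i+1}=\ell_i+g$ follows from the counting of skipped positions, and the non-adjacency $v_iv_{i+1}\notin E$ is immediate since both lie in the independent set $I$. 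One subtlety to address is that each $x_i$ must actually exist in $V'$, i.e.\ that $v_i$ lies in some copy $U_{\ell,\beta}$ for an admissible $\beta$; this is where I would invoke that $v_i$ belongs to the size-$(\alpha-d+1)$ set $I$, so the largest independent set containing $v_i$ has size $\beta \geq \alpha-d+1$, placing $v_i \in \I_\beta$ with $\beta \in \set{\alpha-d+1,\dots,\alpha}$ as required by the vertex set of $G'$.

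The main obstacle I anticipate is precisely this last bookkeeping point: unlike in Section~\ref{sec:trans}, where every relevant vertex sat in $\mathcal{I}$ at a single well-defined position, here a vertex carries both a position $p$ and a ``capacity'' $\beta$, and I must check that the level assignment together with the $\beta$-index lands in a set $L_{p,\ell,\beta}$ that genuinely exists in $V'$. I would therefore state carefully that $\level(x_i)\in\intInterval{d}$ (from Property~\ref{prop-blocker-emptypositionsindset}, since at most $d-1$ gaps precede any position) and that $\beta\geq\alpha-d+1$, which together guarantee $x_i\in V'$. Once this is in place, the rest is a routine transcription of the arc-existence computation from Claim~\ref{claim-trans-arcsexist}, and the two directions combine to give the claimed one-to-one correspondence.
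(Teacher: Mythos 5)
Your proposal is correct and takes essentially the same approach as the paper's proof: bound the path length via Property~\ref{prop-blocker-Pathlength}, get independence from the position-to-ordering lemma combined with Property~\ref{prop-transitive}, and for the converse assign levels by gap-counting and verify the arcs as in Claim~\ref{claim-trans-arcsexist}; your explicit check that $\beta \geq \alpha-d+1$ (so that $x_i$ really lies in $V'$) is a bookkeeping point the paper leaves implicit, and is a welcome addition. One small correction: the step from $\pos(v_i) < \pos(v_{i+1})$ and non-adjacency to $v_i \prec v_{i+1}$ is Lemma~\ref{lemma-blocker-starttoordering}, not Lemma~\ref{lemma-blocker-orderingtostart} (the latter is the converse implication, which is what you need implicitly when you list $I$ in increasing position order in the second direction).
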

\begin{proof}
We first consider an $s$-$t$-path $P=s,x_1,x_2,\dots, x_{h},t$ in $G'$. 
We know from Property~\ref{prop-blocker-Pathlength} that this path consists of exactly $\alpha-d+3$ vertices, hence $h = \alpha-d+1$. 
Let $v_1,\dots, v_{\alpha-d+1}$ be the vertices in $G$ corresponding to $x_1,x_2,\dots, x_{\alpha-d+1}$.
Notice that for any two sets $L_{p,\ell,\beta}$ and $L_{p',\ell',\beta'}$, there can only be an arc from $x_i\in L_{p,\ell,\beta}$ to $x_j \in L_{p',\ell',\beta'}$ in $G'$ if $p <p',\ell \leq \ell'$. 
Thus, $\pos(v_1)  = \pos(x_1)<\ldots <\pos(x_{\alpha-d+1}) = \pos(v_{\alpha-d+1})$. 

\begin{claim1}
$\set{v_1,\ldots,v_{\alpha-d+1}}$ is an independent set in $G$.
\end{claim1}
\begin{claimproof}
The proof to this claim can be obtained by replacing Property~\ref{prop-trans-orderingtopos} by Lemma~\ref{lemma-blocker-starttoordering} in the proof of Claim~\ref{claim-trans-isindset}. 
\end{claimproof}

We will now prove the converse, that is, we show that an independent set of size $\alpha-d+1$ in~$G$ corresponds to an $s$-$t$-path in $G'$. Let $I = \set{v_1,\dots, v_{\alpha-d+1}}$ be an independent set of size $\alpha-d+1$ in~$G$. 
We assume that we have $v_1\prec v_2 \prec \dots \prec v_{\alpha-d+1}$. 
From Lemma~\ref{lemma-blocker-orderingtostart}, it follows that $\pos(v_1)<\pos(v_{2})<\ldots <\pos(v_{\alpha-d+1})$.

Let $g_1,\dots, g_{d-1}$ be as in Property \ref{prop-blocker-emptypositionsindset}.
For $i \in \intInterval{\alpha-d+1}$, let $x_i\in V'$ be the copy of $v_i$ in $L_{\pos(v_i), \ell,\beta}$, where $\ell = \vert\set{g_k \mid g_k < \pos(v_i), k\in \intInterval{d-1}} \vert + 1$ and $\beta \in \set{\alpha-d+1, \dots, \alpha}$ such that $v_i \in I_\beta$.
We see that $x_i$ exists, since $\ell \leq d$.

To get the existence of a path $P = s,x_1,\dots, x_{\alpha-d+1},t $, it remains to show that the arcs $(s,x_1)$, $(x_i, x_{i+1})$, for $i \in\intInterval{\alpha-d}$, and $(x_{\alpha-d+1},t)$ exist.

\begin{claim1}
The arcs $(s,x_1)$, $(x_i, x_{i+1})$, for $i \in\intInterval{\alpha-d}$, and $(x_{\alpha-d+1},t)$ exist.
\end{claim1}
\begin{claimproof}
This proof is the same as the proof of Claim~\ref{claim-trans-arcsexist}. 
\end{claimproof}
It follows that $P = s,x_1,\dots, x_{\alpha-d+1},t $ is a path in $G'$. This concludes the proof of the lemma.
\end{proof}

Recall that $\pos(v)$ denotes the position of $v$ in a maximum independent set containing $v$, while $\pos_I(v)$ denotes the position of $v$ in a specific, not necessarily maximum independent set $I$. To proof our second main result we need a last property, whose proof is omitted since it is similar to the proof of Property~\ref{prop-trans-levelpos}.

\begin{property}
\label{prop-blocker-levelpos}
Let $G$ be a co-comparability graph with vertex ordering $\prec$, $G'$ as constructed above.
Let $I_1,I_2$ be independent sets of size $\alpha-d+1$ in $G$ and let $P_1,P_2$ be their corresponding paths in $G'$.
Suppose there is $v \in I_1 \cap I_2$ such that the corresponding vertices $x_1\in P_1,  x_2\in P_2$ are different.  Assume without loss of generality that $\level(x_1) < \level(x_2)$.
Then,  $\pos_{I_1}(v)> \pos_{I_2}(v)$.
\end{property}

\begin{theorem}
\label{theo-blockeralpha}
\del$(\alpha)$ is polynomial-time solvable for co-comparability graphs.
\end{theorem}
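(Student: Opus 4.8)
The plan is to mirror the structure of the proof of Theorem~\ref{theo-transalpha} exactly, establishing that $(G,d,k)$ is a \yes-instance of \del($\alpha$) if and only if $(G',k)$ is a \yes-instance of \mincut, where $G'$ is the directed graph constructed in this section. By Observation~\ref{obs-blocker-intersection}, a $d$-deletion blocker is precisely a set intersecting every independent set of size $\alpha-d+1$, and by Lemma~\ref{lem-blocker-indsetsarepaths} these independent sets correspond bijectively to $s$-$t$-paths in $G'$. Since \mincut\ is polynomial-time solvable (see~\cite{mincut}) and $G'$ has $\mathcal{O}(d\cdot|V|)$ vertices and can be built in polynomial time (using Lemma~\ref{lem-blocker-computepartition} to obtain the partition into the sets $L_{p,\beta}$), the equivalence yields the result.

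First I would prove the easy direction. Given an $s$-$t$-cut $C$ of size at most $k$, collect the corresponding vertices of $G$ into a set $S$. If some independent set $I$ of size $\alpha-d+1$ survived in $G-S$, then by Lemma~\ref{lem-blocker-indsetsarepaths} it would give an $s$-$t$-path in $G'$ disjoint from $C$, contradicting that $C$ is a cut; hence $S$ intersects every such independent set and is a $d$-deletion blocker by Observation~\ref{obs-blocker-intersection}.

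For the converse, I would reuse Algorithm~\ref{AlgoCutconstruction} verbatim: given a minimal $d$-deletion blocker $S$, process its vertices in $\prec$-order and, for each $u\in S$, add to $C$ the lowest-level copy of $u$ that still lies on some surviving $s$-$t$-path. Minimality of $S$ guarantees such a copy always exists, so $|C|=|S|$. The core argument that $C$ is a genuine cut then proceeds by contradiction: assuming a surviving $s$-$t$-path $P_1$ yields an independent set $I_1$ of size $\alpha-d+1$ meeting $S$ in a rightmost vertex $v$; since the copy of $v$ on $P_1$ was not chosen, a lower-level copy lies on another path $P_2$ giving $I_2$, and Property~\ref{prop-blocker-levelpos} forces $\pos_{I_1}(v)<\pos_{I_2}(v)$. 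Splicing the right part of $I_1$ after $v$ onto the left part of $I_2$ before $v$ (via Property~\ref{prop-transitive}) produces an independent set $J_2\cup J_1$ of size exceeding $\alpha-d$; it is either a blocker-avoiding independent set of size $\alpha-d+1$ (immediate contradiction) or meets $S$ again in some earlier vertex $w$, and one repeats. Since $S$ is finite this descent cannot continue indefinitely, giving the contradiction.

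The main obstacle, and the only place where I must check that the transversal proof actually transfers, is the splicing step and the repeated-descent argument. In Theorem~\ref{theo-transalpha} the sets $I_1,I_2$ are \emph{max-extendable} independent sets of size $\alpha-d+1$, and the argument freely passes to their maximum extensions $I_1^e,I_2^e$ to invoke Property~\ref{prop-transitive} on full-length sets. Here $I_1,I_2$ are merely independent sets of size $\alpha-d+1$, not necessarily max-extendable, so I cannot assume a maximum extension exists. I expect the fix to be that the splicing only requires the \emph{absolute} positions $\pos$ and the clique structure of the sets $L_{p,\beta}$ (Property~\ref{prop-blocker-partitionclique}, and Lemmas~\ref{lemma-blocker-starttoordering}--\ref{lemma-blocker-orderingtostart}), which are precisely the generalisations proved in Section~\ref{sec-structure-nonmax} for non-maximum independent sets; these replace the roles played by Property~\ref{prop-trans-orderingtopos} and the max-extendability in the transversal proof. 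Concretely, I would verify that $J_2\cup\{v\}\cup J_1$ (and its later refinement with $w$) is independent directly from Property~\ref{prop-transitive} applied to vertices ordered by $\prec$, using Lemma~\ref{lemma-blocker-orderingtostart} to translate between $\prec$ and $\pos$, and that its size count is identical to the transversal case, so that the size strictly exceeds $\alpha-d$ and the same dichotomy and finiteness argument close the proof.
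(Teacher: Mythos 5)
Your proposal is correct and follows essentially the same route as the paper's proof: the same reduction via Observation~\ref{obs-blocker-intersection} and Lemma~\ref{lem-blocker-indsetsarepaths}, the same reuse of Algorithm~\ref{AlgoCutconstruction}, and the same splice-and-descend contradiction argument. You also correctly identified the one delicate point—that the transversal proof's passage to maximum extensions is unavailable here—and your proposed fix (splicing $J_2\cup\set{v}\cup J_1$ directly via Property~\ref{prop-transitive}, with the size count driven by Property~\ref{prop-blocker-levelpos}, so that only independence rather than max-extendability is needed) is exactly what the paper does.
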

\begin{proof}

Let $G=(V,E)$ be a co-comparability graph and let $(G,d,k)$ be an instance of \del($\alpha$). We construct the graph $G'=(V',A')$ as described above. We will show that $(G,d,k)$ is a \yes-instance of \del($\alpha$) if and only if $(G',k)$ is a \yes-instance of \mincut.

Let $(G',k)$ be a \yes-instance of \mincut \ and let $C$ be an $s$-$t$-cut of $G'$ of size at most $k$. We want to prove that $(G,d,k)$ is a \yes-instance of \del($\alpha$).

For every vertex in the cut $C\subseteq V'$, we add the corresponding vertex in $G$ to a set $S$. 
We assume for a contradiction that there is an independent set $I$ in $G- S$ of size $\alpha-d+1$. 
By Lemma \ref{lem-blocker-indsetsarepaths}, we know that there is an $s$-$t$-path $P$ in $G'$ representing $I$. 
Since $I \subseteq V\setminus S$, we get that $P\cap C = \varnothing$ and hence, we can find an $s$-$t$-path in $G'- C$, a contradiction.
Thus, such an independent set $I$ does not exist, and so by Observation~\ref{obs-blocker-intersection}, we deduce that $(G,d,k)$ is a \yes-instance of \del($\alpha$).

Let now $(G,d,k)$ be a \yes-instance of \del($\alpha$). 
We want to show that $(G',k)$ is a \yes-instance of \mincut. 
Let $S\subseteq V$ with $|S|\leq k$ be such a $d$-deletion blocker of $G$. 
We may assume that $S$ is minimal.

We iteratively construct a set $C$ using again Algorithm~\ref{AlgoCutconstruction}, with the difference that we get as input the graph $G'$ and $d$-blocker $S$ in $G$ instead of a $d$-transversal. We will prove that $C$ is an $s$-$t$-cut in $G'$ with $|S| = |C|$. 
For each vertex in $S$, the algorithm chooses the corresponding vertex in $G'$ belonging to the lowest level such that there is an $s$-$t$-path in $G'- C$ containing this vertex and then adds it to $C$. 
We will find such a vertex for every vertex in $S$, since otherwise $S$ would not be minimal. 
Hence, it is clear that $|S| = |C|$.

To prove that $C$, which is constructed by application of Algorithm~\ref{AlgoCutconstruction}, is indeed an $s$-$t$-cut in~$G'$, we assume for a contradiction that there exists an $s$-$t$-path $P_1$ in $G'- C$. 
Let $I_1\subseteq V$ be the independent set in $G$ corresponding to $P_1$, which exists by Lemma \ref{lem-blocker-indsetsarepaths}. 
Since $S$ is a $d$-deletion blocker of $G$, we know that $S\cap I_1 \neq \varnothing$. 
Let $v \in S\cap I_1$ and if $|S\cap I_1| > 1$, we choose the rightmost vertex according to $\prec$ in $S\cap I_1$ for $v$. 
Let $y_j\in V'$, $j\in \intInterval{d}$, be the copy of $v$ in $P_1$. 
Since $P_1$ is a path in $G'- C$, we know that $y_j \not\in C$. 
Hence, there is some other copy of $v$, say $y_i$, $i \in \intInterval{d}$,  such that Algorithm~\ref{AlgoCutconstruction} added $y_i$ to $C$. 
Due to the procedure we used in Algorithm~\ref{AlgoCutconstruction} to choose the vertices in $C$, we have $i < j$.

Since $y_i$ was added to $C$, there exists a path $P_2$ in $G'$ containing $y_i$. 
Let $I_2$ be the independent set in $G$ corresponding to $P_2$, which exists by Lemma~\ref{lem-blocker-indsetsarepaths}.
From Property~\ref{prop-blocker-levelpos} we know that $\pos_{I_1}(v) < \pos_{I_2}(v)$. 
Let $J_1 \subseteq I_1$ be the set of the $\alpha-d+1-\pos_{I_1}(v)$ rightmost vertices in $I_1$, i.e.\,those vertices $u$ in $I_1$ such that $v \prec u$ (see Figure~\ref{f-blocker-compindset}a)).  
We know that $J_1 \cap S = \varnothing$ by the choice of $v$. 
Let $J_2$ be the set of the $\pos_{I_2}(v)-1$ leftmost vertices from $I_2$ (see Figure~\ref{f-blocker-compindset}a)), i.e.\,those vertices $u$ in $I_2$ with $u \prec v$.
It follows from Property~\ref{prop-transitive} that $J_2 \prec v \prec J_1$ is an independent set. 
Since $\pos_{I_1}(v) < \pos_{I_2}(v)$, we get that $|J_2\cup J_1| \geq \alpha-d+1$.
Hence, $J_2 \cup J_1$ either is an independent set of size at least $\alpha-d+1$ in $G- S$ or $J_2 \cap S \neq \varnothing$. 
In the first case, we directly get a contradiction to our assumption that $S$ is a $d$-deletion blocker in $G$.
So, we may assume that $J_2 \cap S \neq \varnothing$.

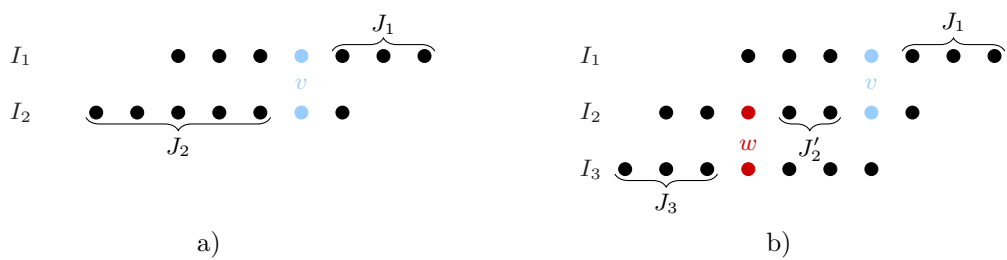
\begin{figure}
\centering
\begin{tikzpicture}

	\tikzstyle{interval} = [thick]
	\tikzstyle{intervald} = [thick, dotted]
	\tikzstyle{intervalv} = [thick, RoyalBlue]
	\tikzstyle{intervalu} = [thick, ForestGreen]
	\tikzstyle{intervalr} = [thick, Red]
	\tikzstyle{interval1d} = [thick, ForestGreen, dotted]
	\tikzstyle{indsetJ} = [thick, Red]
	\def\ha{3}
	\def\hb{2.25}
	\def\hc{1.5}
	
	\begin{scope}[ xscale = 0.9]

	\node[vertex](a) at (0.8,\ha){};	
	\node[vertex](a) at (1.4,\ha){};	
	\node[vertex](a) at (2,\ha){};	
	\node[bvertex](a) at (2.6,\ha){};	
	\node[vertex](a) at (3.2,\ha){};	
	\node[vertex](a) at (3.8,\ha){};	
	\node[vertex](a) at (4.4,\ha){};		
	
	\node[vertex](a) at (-0.4,\hb){};
	\node[vertex](a) at (0.2,\hb){};	
	\node[vertex](a) at (0.8,\hb){};	
	\node[vertex](a) at (1.4,\hb){};	
	\node[vertex](a) at (2,\hb){};	
	\node[bvertex](a) at (2.6,\hb){};	
	\node[vertex](a) at (3.2,\hb){};

	\node[font = \small, lightblue] at (2.6,2.625){$v$};
	
	\node[font = \small, Black] at (-1.5,\ha){$I_1$};
	\node[font = \small, Black] at (-1.5,\hb){$I_2$};

	\draw [decorate,decoration={brace, mirror,amplitude=5pt},xshift=0pt,yshift=11pt] (-0.55,1.8) -- (2.15,1.8) node [black,midway,yshift=-4pt, below] {\small $J_2$};
	\draw [decorate,decoration={brace,,amplitude=5pt},xshift=0pt,yshift=-11pt] (3.05,3.45) -- (4.55,3.45) node [black,midway,yshift=3pt, above] {\small $J_1$};

	\node[] at (1.25,0.5){a)};
	\end{scope}
	
	\begin{scope}[shift = {(7.5,0)}, xscale = 0.9]

	\node[vertex](a) at (0.8,\ha){};	
	\node[vertex](a) at (1.4,\ha){};	
	\node[vertex](a) at (2,\ha){};	
	\node[bvertex](a) at (2.6,\ha){};	
	\node[vertex](a) at (3.2,\ha){};	
	\node[vertex](a) at (3.8,\ha){};	
	\node[vertex](a) at (4.4,\ha){};		
	
	\node[vertex](a) at (-0.4,\hb){};
	\node[vertex](a) at (0.2,\hb){};	
	\node[rvertex](a) at (0.8,\hb){};	
	\node[vertex](a) at (1.4,\hb){};	
	\node[vertex](a) at (2,\hb){};	
	\node[bvertex](a) at (2.6,\hb){};	
	\node[vertex](a) at (3.2,\hb){};

	\node[vertex](a) at (-1,\hc){};
	\node[vertex](a) at (-0.4,\hc){};	
	\node[vertex](a) at (0.2,\hc){};	
	\node[rvertex](a) at (0.8,\hc){};	
	\node[vertex](a) at (1.4,\hc){};	
	\node[vertex](a) at (2,\hc){};	
	\node[vertex](a) at (2.6,\hc){};

	\node[font = \small, lightblue] at (2.6,2.625){$v$};
	\node[font = \small, nicered] at (0.8,1.825){$w$};
	
	\node[font = \small, Black] at (-1.5,\ha){$I_1$};
	\node[font = \small, Black] at (-1.5,\hb){$I_2$};
	\node[font = \small, Black] at (-1.5,\hc){$I_3$};

	\draw [decorate,decoration={brace, mirror,amplitude=5pt},xshift=0pt,yshift=11pt] (-1.15,1.05) -- (0.35,1.05) node [black,midway,yshift=-4pt, below] {\small $J_3$};
		\draw [decorate,decoration={brace, mirror,amplitude=5pt},xshift=0pt,yshift=11pt] (1.25,1.8) -- (2.15,1.8) node [black,midway,yshift=-4pt, below] {\small $J_2'$};
		\draw [decorate,decoration={brace,,amplitude=5pt},xshift=0pt,yshift=-11pt] (3.05,3.45) -- (4.55,3.45) node [black,midway,yshift=4pt, above] {\small $J_1$};
		
		\node[] at (1.25,0.5){b)};

	\end{scope}

\end{tikzpicture}
%
%
%
%
%
%
%
%
%
\caption{\label{f-blocker-compindset}The different independent sets considered in the proof. }
\end{figure}

Let $w\in J_2 \cap S$, and if $|J_2 \cap S|>1$, we take the rightmost vertex $w$ in $J_2 \cap S$ with respect to $\prec$ such that $w \prec v$. 
Let $y_h$, $h\in \intInterval{d}$ be the vertex in $P_2$ corresponding to $w$. 
Then $y_h \not\in C$, since otherwise Algorithm~\ref{AlgoCutconstruction} would not have added $y_h$ to $C$. 
Thus, as before, there exists some vertex $y_g$, $g \in \intInterval{d}$, with $g < h$, such that $y_g$ corresponds to $w$ and $y_g \in C$.
Therefore, there exists an $s$-$t$-path $P_3$ in $G'$ containing $y_g$ with corresponding independent set $I_3$.
$I_3$ contains more vertices to the left of $w$ than $I_2$, since by Property~\ref{prop-blocker-levelpos} we have that $\pos_{I_2}(w) < \pos_{I_3}(w)$ (see Figure \ref{f-blocker-compindset}b)). 
Remember that $J_2 \cup J_1 $ is an independent set.
We consider the set $J_2' = \set{u \in I_2|\pos_{I_2}(w) < \pos_{I_2}(u) < \pos_{I_2}(v)} = \set{u \in I_2| w \prec u \prec v}$ which is a subset of the set $J_2$. 
Thus, $J_2' \cup J_1$ is still an independent set.
We define a new set $J_3 = \set{u \in I_3|\pos_{I_3}(u) < \pos_{I_3}(w)}\subseteq I_3$, which is an independent set, since $I_3$ is an independent set. 
From the fact that $J_3$, respectively $J_2'\cup J_1$, contains only vertices on the left, respectively right, of $w$ and that both are independent to $w$ we get that $J_3 \cup J_2' \cup J_1$ is an independent set.
Since $\pos_{I_1}(v) < \pos_{I_2}(v)$ and $\pos_{I_2}(w) < \pos_{I_3}(w)$, we get that $|J_3 \cup J_2' \cup J_1| \geq \alpha-d+1$. 
Hence, in $G- S$ we get again either an independent set $J_3 \cup J_2' \cup J_1$ of size at least $\alpha-d+1$ or $J_3\cap S\neq \varnothing$.

By repeatedly using these arguments, we can always find a new vertex in $S$. But since $S$ is finite, this case can not always occur. Hence, we will necessarily get a contradiction and thus, there is no $s$-$t$-path $P$ in $G'-C$.  So we conclude that $C$ is an $s$-$t$-cut in $G'$.

Let us now consider the complexity of our algorithm. From~\cite{tarjanmaxflow} we know that for a graph with $n$ vertices we can solve \mincut \ in $\mathcal{O}(n^3)$. Since the graph $G'$ has $\mathcal{O}(d|V|)$ vertices, computing a \mincut \ in $G'$ can be done in time $\mathcal{O}(d^3|V|^3)$.
We still need to consider the time we need to construct $G'$. Using Lemma~\ref{lem-blocker-computepartition}, we know that we can find the partition of $\I$ into sets $L_{i}$ in $\mathcal{O}(|V|^2)$. For every pair of vertices in $G'$, we can check in $\mathcal{O}(|V|)$ if we introduce an arc between them in~$G'$. Hence, $G'$ can be constructed in $\mathcal{O}(d^2|V|^3)$.
We conclude that \del($\alpha$) can be solved in time $\mathcal{O}(d^3|V|^3)$.
\end{proof}

\section{Conclusion}
\label{sec:conclusion}
In this paper, we showed that \del($\alpha$) and \trans($\alpha$) are polynomial-time solvable in the class of co-comparability graphs by reducing them to the well-known \mincut\ problem. This generalises results of \cite{CKL01} and \cite{HLW23}. We believe that our approach (reduction to the \mincut\ problem) can also be used to solve the weighted version of our problems in polynomial time, i.e., where we consider maximum weighted independent sets. The same holds for \del($\gamma$) and \trans($\gamma$), i.e., the blocker and the transversal problems with respect to minimum dominating sets. We leave this as open questions.

\medskip
\noindent
\textit{Acknowledgements.}
The first author thanks Jannik Olbrich for helpful discussions.

\bibliography{ref}

\end{document}